\theoremstyle{plain}
\newtheorem{thm}{Theorem}[section]
\newtheorem{prop}[thm]{Proposition}
\newtheorem{lem}[thm]{Lemma}
\newtheorem{claim}[thm]{Claim}
\newtheorem{ques}[thm]{Question}
\newcommand{\QS}{\textrm{`}}
\newcommand{\QT}{\textrm{'}}
\title{\vspace{-0.7cm}The Erd\H{o}s-Gy\'arf\'as problem on generalized Ramsey numbers}
\author{David Conlon \thanks{Mathematical Institute, Oxford OX2 6GG,
United Kingdom. Email: {\tt david.conlon@maths.ox.ac.uk}. Research
supported by a Royal Society University Research Fellowship.}
\and
Jacob Fox \thanks{Department of Mathematics, MIT, Cambridge,
MA 02139-4307. Email: {\tt fox@math.mit.edu}. Research supported by
a Packard Fellowship, by a Simons Fellowship, by NSF grant DMS-1069197, by an Alfred P. Sloan Fellowship  and by an MIT NEC Corporation Award.}
\and
Choongbum Lee \thanks{Department of Mathematics,
MIT, Cambridge, MA 02139-4307. Email: {\tt cb\_lee@math.mit.edu}.}
\and
Benny Sudakov \thanks{Department of Mathematics, ETH, 8092 Zurich, Switzerland and
Department of Mathematics, UCLA, Los Angeles,
CA 90095. Email: {\tt benjamin.sudakov@math.ethz.ch}. Research supported in
part by SNSF grant 200021-149111 and by a USA-Israel BSF grant.}}
\date{}
\begin{document}
\maketitle

\begin{abstract} Fix positive integers $p$ and $q$ with $2 \leq q \leq {p \choose 2}$. 
An edge-coloring of the complete graph $K_n$ is said to be a $(p, q)$-coloring if every $K_p$ receives at least $q$ different colors. The function $f(n, p, q)$ is the minimum number of colors that are needed for $K_n$ to have a $(p,q)$-coloring. This function was introduced by Erd\H{o}s and Shelah about 40 years ago, but Erd\H{o}s and Gy\'arf\'as were the first to study the function in  a systematic way. They proved that $f(n, p, p)$ is polynomial in $n$ and asked to determine the maximum $q$, depending on $p$, for which $f(n,p,q)$ is subpolynomial in $n$. We prove that the answer is $p-1$. 
\end{abstract}

\section{Introduction}

The Ramsey number $r_k(p)$ is the smallest natural number $n$ such that every $k$-coloring of the edges of the complete graph $K_n$ contains a monochromatic $K_p$. The existence of $r_k(3)$ was first shown by Schur \cite{Sc} in 1916 in his work on Fermat's Last Theorem and it is known that $r_k(3)$ is at least exponential in $k$ and at most a multiple of $k!$. It is a central problem in graph Ramsey theory to close the gap between the lower and upper bound, with connections to various problems in combinatorics, geometry, number theory, theoretical computer science and information theory (see, e.g., \cite{NeRo, Ro}).

The following natural generalization of the Ramsey function was first introduced by Erd\H{o}s and Shelah \cite{E75, E81} and studied in depth by Erd\H{o}s and Gy\'arf\'as \cite{EG97}. Let $p$ and $q$ be positive integers with $2 \leq q \leq \binom{p}{2}$. An edge-coloring of the complete graph $K_n$ is said to be a $(p, q)$-coloring if every $K_p$ receives at least $q$ different colors. The function $f(n, p, q)$ is the minimum number of colors that are needed for $K_n$ to have a $(p,q)$-coloring. 

To see that this is indeed a generalization of the usual Ramsey function, note that $f(n, p, 2)$ is the minimum number of colors needed to guarantee that no $K_p$ is monochromatic. That is, $f(n, p, 2)$ is the inverse of the Ramsey function $r_k(p)$ and so we have
\[c' \frac{\log n}{\log \log n} \leq f(n, 3, 2) \leq c \log n.\]

Erd\H{o}s and Gy\'arf\'as \cite{EG97} proved a number of interesting results about the function $f(n,p,q)$, demonstrating how the function falls off from being equal to $\binom{n}{2}$ when $q = \binom{p}{2}$ to being at most logarithmic when $q = 2$. In so doing, they determined ranges of $p$ and $q$ where the function $f(n, p, q)$ is linear in $n$, where it is quadratic in $n$ and where it is asymptotically equal to $\binom{n}{2}$. Many of these results were subsequently sharpened by S\'ark\"ozy and Selkow \cite{SS01, SS03}.

One simple observation made by Erd\H{o}s and Gy\'arf\'as is that $f(n, p, p)$ is always polynomial in $n$. To see this, it is sufficient to note that if a coloring uses fewer than $n^{1/(p-2)} - 1$ colors then it necessarily contains a $K_p$ which uses at most $p - 1$ colors. For $p = 3$, this is easy to see since one only needs that some vertex has at least two neighbors in the same color.  For $p = 4$, we have that any vertex will have at least $n^{1/2}$ neighbors in some fixed color. But, since there are fewer than $n^{1/2} - 1$ colors on this neighborhood of size at least $n^{1/2}$, the case $p = 3$ implies that it contains a triangle with at most two colors.  The general case follows similarly. 

Erd\H{o}s and Gy\'arf\'as \cite{EG97} asked whether this result is best possible, that is, whether $q = p$ is the smallest value of $q$ for which $f(n, p, q)$ is polynomial in $n$. For $p = 3$, this is certainly true, since we know that $f(n, 3, 2) \leq c \log n$.  However, for general $p$, they were only able to show that $f(n, p, \lceil \log p \rceil)$ is subpolynomial, where here and throughout the paper we use $\log$ to denote the logarithm taken base $2$. This left the question of determining whether $f(n, p, p - 1)$ is subpolynomial wide open, even for $p = 4$.

The first progress on this question was made by Mubayi \cite{M98}, who found an elegant construction which implies that $f(n, 4, 3) \leq e^{c \sqrt{\log n}}$. This construction was also used by Eichhorn and Mubayi \cite{EM00} to demonstrate that $f(n, 5, 4) \leq e^{c \sqrt{\log n}}$. More generally, they used the construction to show that $f(n, p, 2 \lceil \log p \rceil - 2)$ is subpolynomial for all $p \geq 5$.

In this paper, we answer the question of Erd\H{o}s and Gy\'arf\'as in the positive for all $p$. That is, we prove that $f(n, p, p -1)$ is subpolynomial for all $p$. Quantitatively, our main theorem is the following.

\begin{thm} \label{thm:main}
For all natural numbers $p \geq 4$ and $n \geq 1$, 
\[f(n, p, p-1) \leq 2^{16p (\log n)^{1 - 1/(p-2)} \log \log n}.\]
\end{thm}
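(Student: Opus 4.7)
The plan is to construct an explicit $(p,p-1)$-coloring of $K_n$ with the claimed number of colors via a product-of-coordinates construction, verified by the probabilistic method. The bound is rewritable as $\bigl((\log n)^{O(p)}\bigr)^{t}$ with $t := \lceil (\log n)^{1 - 1/(p-2)} \rceil$, strongly suggesting a product over $t$ coordinates with $(\log n)^{O(p)}$ colors per coordinate.

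Set $m := \lceil n^{1/t} \rceil$, so that $\log m \asymp (\log n)^{1/(p-2)}$, identify the vertex set of $K_n$ with (a subset of) $[m]^t$, and put $k := \lceil (\log n)^{Cp} \rceil$ for an appropriate constant $C$. For each coordinate $i \in [t]$, I would choose an edge-coloring $\chi_i \colon \binom{[m]}{2} \to [k]$ (initially as a uniform random coloring; likely needing extra structure, see below), and define the color of edge $\{u,v\}$ to be the $t$-tuple $c(u,v) = (c_1,\dots,c_t)$ where $c_i = \chi_i(u_i,v_i)$ if $u_i \neq v_i$, and $c_i = \ast$ otherwise. The total palette has size at most $(k+1)^t$, fitting the target bound. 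For any $p$-subset $S \subseteq [m]^t$, the induced $K_S$ uses at most $p-2$ colors iff there is a partition $\pi^*$ of $\binom{S}{2}$ into at most $p-2$ parts such that every $\chi_i$ is constant on each part of $\pi^*$ (respecting the $\ast$-convention). By independence across $i$, the probability of this bad event factorizes over coordinates as a product of terms, each a power of $1/k$ determined by how $\pi^*$ interacts with the projection of $S$ at that coordinate.

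I would then close the union bound by stratifying $p$-subsets according to their \emph{shape profile} $(\sigma_1,\dots,\sigma_t)$, where $\sigma_i$ is the set-partition of $[p]$ recording which of the $i$-th coordinates of the $p$ vertices coincide; a given profile is realized by at most $\prod_i m^{|\sigma_i|}$ $p$-subsets. The main obstacle, and the heart of the argument, is closing this union bound across all shape profiles simultaneously with the target $k = (\log n)^{\Theta(p)}$: profiles with all projections distinct give the strongest per-coordinate probability bound (a factor $k^{-(\binom{p}{2} - |\pi^*|)}$ per coordinate) but are the most abundant, of order $n^p$; conversely, highly-degenerate profiles lose most of their per-coordinate randomness because edges between equal projections are deterministically in the $\ast$-class. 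A naive uniform-random $\chi_i$ appears too weak to close the union bound at the target $k$, so additional structure in the base colorings — for instance an algebraic or block-design choice guaranteeing a uniform "distinguishing power" over all $p$-tuples of values in $[m]$ — is likely required. Tuning $t$, $m$, $k$, and the structural constraints on $\chi_i$ so that the stratified union bound closes with the claimed number of colors is the principal technical challenge.
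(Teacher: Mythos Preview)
Your proposal has a genuine gap that cannot be patched within the single-level product framework you describe. Consider the degenerate shape profile in which all $p$ vertices agree in every coordinate except one, say coordinate $1$: the vertices are $(a_1,x_2,\dots,x_t),\dots,(a_p,x_2,\dots,x_t)$ with the $a_j$ distinct. Then every edge color is $(\chi_1(a_j,a_{j'}),\ast,\dots,\ast)$, so the number of colors on this $K_p$ equals the number of colors $\chi_1$ uses on $\{a_1,\dots,a_p\}$. Hence $\chi_1$ must itself be a $(p,p-1)$-coloring of $K_m$. With your parameters $\log m \asymp (\log n)^{1/(p-2)}$ and $k=(\log n)^{O(p)}$, this is impossible: any $(p,p-1)$-coloring of $K_m$ needs at least $2^{c(\log m)^{1-1/(p-2)}}$ colors by the very bound you are trying to prove (and a random $\chi_1$ needs even more, roughly $m^{\Theta(1/p)}$), which for large $n$ far exceeds $(\log n)^{O(p)}$. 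No amount of ``algebraic or block-design'' structure in a single base coloring can evade this, because the requirement on $\chi_1$ is the full $(p,p-1)$ property on $K_m$, not a probabilistic statement amenable to a union bound.

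This is exactly the obstruction the paper identifies when explaining why Mubayi's coloring (which is your construction with $t$ coordinates and a trivial $\chi_i$) fails for large $p$: taking $S=[s]^s\subset[m]^t$ shows a single product level is not a $(p,p-1)$-coloring once $p$ is moderately large. The paper's fix is not probabilistic but structural: instead of one product level, it builds a hierarchy of $p-2$ nested resolutions $r_0\mid r_1\mid\cdots\mid r_{p-2}$ (so the base coloring is itself a product, recursively, $p-2$ times), and the color records, at every scale, the first differing block. The proof that this is a $(p,p-1)$-coloring is an induction on the vector length together with a pigeonhole over the $p-2$ scales to find one at which ``emerging'' colors are injective; the depth $p-2$ of the hierarchy is exactly what makes this pigeonhole go through for $p$-sets. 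Your outline captures the outermost layer of this recursion but stops there; to make it work you would have to iterate the product construction $\Theta(p)$ times and replace the union bound by an inductive combinatorial argument of the type the paper carries out.
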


In Section \ref{sec:definition}, we define our $(p,p-1)$-coloring
by a recursive procedure. We begin by reviewing Mubayi's $(4,3)$-coloring,
as it is the base case of our recursion. The formal proof
of the fact that our coloring is indeed a $(p,p-1)$-coloring is quite
technical and thus we first give an outline of the proof 
in Section \ref{sec:outline}. Then, in Section \ref{sec:properties}, we 
establish some properties of the coloring. Finally, in Section \ref{sect:proofmainthm},
we prove that the coloring given in Section \ref{sec:definition} is a
$(p,p-1)$-coloring. We will conclude with some further remarks.

\medskip{}

\textbf{Notation.} For vectors $v\in X^{t_{1}+t_{2}},v_{1}\in X^{t_{1}},v_{2}\in X^{t_{2}}$,
we will often use the notation 
\[
v=(v_{1},v_{2}),
\]
in order to indicate that the $i$-th coordinate of $v$ is equal to the
$i$-th coordinate of $v_{1}$ for $1\le i\le t_{1}$ and the
$(t_{1}+j)$-th coordinate of $v$ is equal to the $j$-th coordinate
of $v_{2}$ for $1\le j\le t_{2}$. We will use similar notation for several
vectors. Throughout the paper, $\log$ denotes the base 2 logarithm.
For the sake of clarity of presentation, we systematically omit floor
and ceiling signs whenever they are not essential. 

\section{The coloring construction}
\label{sec:definition}

The purpose of this section is to define the coloring used to prove Theorem \ref{thm:main}. The coloring can be considered as a generalization of (a variant of) Mubayi's $(4,3)$-coloring. We therefore first introduce this coloring and then redefine it in a way that can be naturally extended. We then present the coloring used to prove Theorem \ref{thm:main}. As it is a rather involved recursive definition, we give an example to illustrate it. We conclude the section by establishing a bound on the number of colors used in this coloring. In the following sections, we will show that this coloring is a $(p,p-1)$-coloring, completing the proof. 

\subsection{Mubayi's $(4,3)$-coloring\label{sub:Mubayi}}

Let $N=m^{t}$ for some integers $m$ and $t$. Suppose that we are
given two distinct vectors $v,w\in[m]^{t}$ of the form $v=(v_{1},\ldots,v_{t})$
and $w=(w_{1},\ldots,w_{t})$. Define 
\[
c(v,w)=\big(\{v_{i},w_{i}\},a_{1},\ldots,a_{t}\big),
\]
where $i$ is the least coordinate in which $v_{i}\neq w_{i}$ and
$a_{j}=0$ if $v_{j}=w_{j}$ and $a_{j}=1$ if $v_{j}\neq w_{j}$.
If $v=w$, define
\[
c(v,v)=0.
\]
Note that $c$ is a symmetric function.
This is a variant of Mubayi's coloring and can be proved to be a
$(p,p-1)$-coloring for small values of $p$. 

One might suspect that this is a $(p,p-1)$-coloring for large integers
$p$ as well, but, unfortunately, it fails to
be a $(26,25)$-coloring (and a $(p,p-1)$-coloring for all $p\ge26$)
for the following reason. Consider the set $\{1,2,3\}^{3}$. This
set has $3^{3}=27$ elements and at most $3\cdot2^{3}=24$
colors are used in coloring this set. Therefore, we can find 26 vertices
with at most 24 colors within the set. Moreover, for every fixed $p$
and large enough $N$, letting $s=\sqrt{\log p}$, the set $S=\{1,2,\ldots,2^{s}\}^{s}$
has cardinality $2^{s^{2}}=p$ and uses at most ${2^{s} \choose 2}2^{s}<2^{3s}=2^{3\sqrt{\log p}}$
colors and, for large enough $m$ and $t$, is a subset of $[m]^{t}$.
Hence, this edge-coloring of the complete graph on $[N]$ fails
to be a $(p,2^{3\sqrt{\log p}})$-coloring.

\subsection{Redefining Mubayi's coloring}

Before proceeding further, let us redefine the coloring given above
from a slightly different perspective. We do this to motivate the $(p,p-1)$-coloring which we use to establish Theorem \ref{thm:main}. Let $m=2^{r_{1}}$ and, abusing notation, identify the set $[m]$
with $\{0,1\}^{r_{1}}$. Let $r_{2}=r_{1}t$ for some positive integer
$t$. Suppose that we are given two vectors $v,w\in[m]^{t}=\{0,1\}^{r_{1}t}$. We decompose
$v$ as $v=(v_{1}^{(1)},\ldots,v_{t}^{(1)})$, where $v_{i}^{(1)}\in\{0,1\}^{r_{1}}$
for $i=1,2,\ldots,t$ and similarly decompose $w$. The function $c$
was defined as follows:
\[
c(v,w)=\big(\{v_{i}^{(1)},w_{i}^{(1)}\},a_{1},\ldots,a_{t}\big),
\]
where $i$ is the least coordinate in which $v_{i}^{(1)}\neq w_{i}^{(1)}$ and,
for $j=1,2,\ldots,t$, $a_j$ represents whether $v_{j}^{(1)}=w_{j}^{(1)}$
or not. If $v=w$, then $c(v,v)=0$. 

Define $h_{1}$ as the first coordinate of $c$. That is, $h_{1}(v,w)=\{v_{i}^{(1)},w_{i}^{(1)}\}$
(we let $h_{1}(v,v)=0$ for convenience). Note that $h_{1}$ takes
a pair of vectors of length $r_{2}=r_{1}t$ as input and outputs
a pair of vectors of length $r_{1}$.

For two vectors $x,y\in\{0,1\}^{r_{1}}$ of the form $x=(x_{1},\ldots,x_{r_{1}})$,
$y=(y_{1},\ldots,y_{r_{1}})$, define the function
$h_{0}$ as follows. We have $h_{0}(x,x)=0$ for each $x$ and, if $x \neq y$,
then $h_{0}(x,y)=\{x_{i},y_{i}\},$ where $i$ is the minimum
index for which $x_{i}\neq y_{i}$. Since all $x_{i}$ and $y_{i}$
are either 0 or 1, there are only two possible outcomes for $h_{0}$,
$0$ if the two vectors are equal and $\{0,1\}$ if they are not equal.
Note that $h_{0}$ takes a pair of 
vectors of length $r_{1}$ as input and outputs a pair of vectors 
of length $r_{0}=1$. Thus, both $h_{1}$ and $h_{0}$ are functions
which record the first `block' that is different. The difference between the
two functions lies in their interpretation of `block': for $h_{1}$ it is
a subvector of length $r_{1}$ and for $h_{0}$ it is a subvector of 
length $r_{0}$.

Summarizing, we see that $c$ is equivalent to the coloring $c'$ given by
\[
c'(v,w)=\Big(h_{1}(v,w),h_{0}(v_{1}^{(1)},w_{1}^{(1)}),\ldots,h_{0}(v_{t}^{(1)},w_{t}^{(1)})\Big).
\]
Informally, we first decompose the given pair of vectors $v$ and
$w$ into subvectors of length $r_{2}$ and apply $h_{1}$ (we observe
only a single subvector in this case since $v$ and $w$ themselves
are vectors of length $r_{2}$). Then we decompose $v$ and $w$
into subvectors of length $r_{1}$ and apply $h_{0}$ to each 
corresponding pair of subvectors of $v$ and $w$.

\subsection{Definition of the coloring}\label{subsect:defcolor}

In this section, we generalize the construction given in the previous section to obtain a $(p,p-1)$-coloring. 

For a positive integer $\alpha$, we will describe the coloring as
an edge-coloring of the complete graph over the vertex set $\{0,1\}^{\alpha}$.
Let $r_{0},r_{1},\ldots$ be a sequence of positive integers such that $r_{0}=1$
and $r_{d-1}$ divides $r_{d}$ for all $d\ge1$. 

For a set of indices $I$, let $\pi_{I}$ be the canonical projection
map from $\{0,1\}^{\alpha}$ to $\{0,1\}^{I}$. We will write $\pi_{i}$
instead of $\pi_{[i]}$ for convenience. Thus $\pi_{i}$ is the projection
map to the first $i$ coordinates. 

The key idea in the construction is to understand vectors at several
different resolutions. Suppose that we are given two vectors $v,w\in\{0,1\}^{\alpha}$.
For $d\ge0$, let $a_d$ and $b_d$ be integers
satisfying $a_{d}\ge0$ and $1\le b_{d}\le r_{d}$ such that $\alpha=a_{d}r_{d}+b_{d}$.
Let 
\[
v=\Big(v_{1}^{(d)},v_{2}^{(d)},\ldots,v_{a_{d}+1}^{(d)}\Big),
\]
where $v_{i}^{(d)}\in\{0,1\}^{r_{d}}$ for $i=1,2,\ldots,a_{d}$
and $v_{a_{d}+1}^{(d)}\in\{0,1\}^{b_{d}}$. We refer to the vectors
$v_{i}^{(d)}$ as \emph{blocks of resolution d}. We similarly decompose
$w$ as $w=\big(w_{1}^{(d)},w_{2}^{(d)},\ldots,w_{a_{d}}^{(d)},w_{a_{d}+1}^{(d)}\big)$
for $d\ge0$.

We first define two auxiliary families of functions $\eta_{d}$ and
$\xi_{d}$. For $d\ge0$, if $v\neq w$, let 
\[
\eta_{d}(v,w)=\Big(i,\{v_{i}^{(d)},w_{i}^{(d)}\}\Big),
\]
where $i$ is the minimum index such that $v_{i}^{(d)}\neq w_{i}^{(d)}$.
If $v=w$, let 
\[
\eta_{d}(v,v)=0.
\]
Note that $\eta_{d}$ is a symmetric function.
Further note that $\eta_{d}$ is slightly different from $h_{d}$ defined
in the previous subsection since we add an additional coordinate which
records the index $i$ as well. The main theorem is valid even if
we do not add this index, but we choose to add it as it simplifies
the proof. We refer the reader to Subsection \ref{sec:conclude_fewer_colors} for 
a further discussion of this point.

For $d\ge0$, let 
\[
\xi_{d}(v,w)=\Big(\eta_{d}\big(v_{1}^{(d+1)},w_{1}^{(d+1)}\big),\ldots,\eta_{d}\big(v_{a_{d+1}+1}^{(d+1)},w_{a_{d+1}+1}^{(d+1)}\big)\Big).
\]
Note that the function $\xi_{d}$ decomposes the vectors into blocks
of resolution $d+1$ and outputs a vector containing
information about blocks of resolution $d$.

For $d\ge0$, let 
\[
c_{d}=\xi_{d}\times\xi_{d-1}\times\ldots\times\xi_{0}.
\]
Note that the coloring $c_{d}$ depends on the choice of the parameters
$r_{0},r_{1},\ldots,r_{d+1}$. 

\medskip

We prove our main theorem in two steps: we first estimate the number
of colors and then prove that it is a $(p,p-1)$-coloring.

\begin{thm}
\label{thm:main1}Let $p$ and $\beta$ be fixed positive integers with $\beta \not = 1$. For the choice $r_{i}=\beta^{i}$
for $0\le i\le p+1$, the edge-coloring $c_{p}$ of the complete graph
on $n=2^{\beta^{p+1}}$ vertices uses at most $2^{4(\log n)^{1-1/(p+1)}\log\log n}$ colors.
\end{thm}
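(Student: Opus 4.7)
The plan is to directly count the possible outputs of the coloring $c_p = \xi_p \times \xi_{p-1} \times \cdots \times \xi_0$ and then simplify. The number of colors is at most the product over $d \in \{0, 1, \ldots, p\}$ of the number of possible outputs of $\xi_d$, so the entire calculation reduces to bounding the image size of each $\xi_d$.

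First I would handle the building block $\eta_d$. Inside $\xi_d$ this function is applied to pairs of blocks of resolution $d+1$, each of length $r_{d+1} = \beta \cdot r_d$. Such a block decomposes into exactly $\beta$ sub-blocks of length $r_d$, because $r_d$ divides $r_{d+1}$ with quotient $\beta$ in our choice $r_i = \beta^i$. Consequently the output of $\eta_d$ is either $0$ or a pair $(i, \{x,y\})$ with $i \in \{1, \ldots, \beta\}$ and $\{x,y\}$ a two-element subset of $\{0,1\}^{r_d}$, giving at most
\[
1 + \beta \binom{2^{r_d}}{2} \;\le\; \beta \cdot 2^{2 r_d} \;=\; \beta \cdot 2^{2\beta^d}
\]
possibilities. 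Since $\alpha = \beta^{p+1}$ is divisible by $r_{d+1}$, the number of blocks of resolution $d+1$ is $\alpha/r_{d+1} = \beta^{p-d}$, and $\xi_d$ concatenates $\eta_d$ over these blocks. Hence $\xi_d$ takes at most $(\beta \cdot 2^{2\beta^d})^{\beta^{p-d}} = \beta^{\beta^{p-d}} \cdot 2^{2\beta^p}$ values.

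Multiplying over $d = 0, 1, \ldots, p$, the number of colors used by $c_p$ is at most
\[
\prod_{d=0}^{p} \beta^{\beta^{p-d}} \cdot 2^{2\beta^p} \;=\; \beta^{\,\sum_{d=0}^{p}\beta^{p-d}} \cdot 2^{2(p+1)\beta^p} \;\le\; \beta^{2\beta^p} \cdot 2^{2(p+1)\beta^p},
\]
using the geometric sum bound $\sum_{d=0}^p \beta^{p-d} = (\beta^{p+1}-1)/(\beta-1) \le 2\beta^p$ valid for integers $\beta \ge 2$. Taking logarithms,
\[
\log(\text{\#colors}) \;\le\; 2\beta^p \log\beta + 2(p+1)\beta^p \;\le\; 4(p+1)\beta^p \log\beta,
\]
where the final inequality uses $\log\beta \ge 1$ together with $2(p+1) \le (4p+2)\log\beta$.

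To conclude, I would substitute $\log n = \beta^{p+1}$ and $\log\log n = (p+1)\log\beta$, which gives $(\log n)^{1-1/(p+1)} = \beta^p$, so the exponent becomes $4(\log n)^{1-1/(p+1)}\log\log n$, matching the claim. There is no genuine obstacle here: the counting is purely combinatorial and the only care required is in the arithmetic that keeps every constant under $4$; the $\beta \ne 1$ hypothesis is precisely what makes the geometric-series bound and the absorption $p+1 \le (2p+1)\log\beta$ go through cleanly.
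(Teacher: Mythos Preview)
Your argument is correct and follows essentially the same counting approach as the paper: bound the image of each $\eta_d$, raise to the number of blocks to bound each $\xi_d$, multiply over $d$, and substitute $\beta^p=(\log n)^{1-1/(p+1)}$ and $(p+1)\log\beta=\log\log n$. The one difference is that you bound the index in $\eta_d$ by $\beta$ (since inside $\xi_d$ it is applied to blocks of length $r_{d+1}=\beta r_d$) whereas the paper uses the cruder bound $\alpha=\beta^{p+1}$; your observation is sharper but both routes land on the same exponent $4(p+1)\beta^p\log\beta$.
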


\begin{thm}
\label{thm:main2} Let $p$ and $\alpha$ be fixed positive integers.
Then, for every choice of parameters $r_{1},\ldots,r_{p+1}$, the edge-coloring
$c_{p}$ is a $(p+3,p+2)$-coloring of the complete graph on the vertex
set $\{0,1\}^{\alpha}$.
\end{thm}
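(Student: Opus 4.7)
The plan is to prove Theorem \ref{thm:main2} by induction on $p$, treating as the base case the auxiliary statement that $c_0 = \xi_0$ is a $(3,2)$-coloring. For the base case I argue that $\xi_0$ admits no monochromatic triangle: if $a,b,c$ were distinct with $\xi_0(a,b)=\xi_0(a,c)=\xi_0(b,c)$, then at each resolution-$1$ block position $j$ the three values $\eta_0(a_j^{(1)},b_j^{(1)})$, $\eta_0(a_j^{(1)},c_j^{(1)})$, $\eta_0(b_j^{(1)},c_j^{(1)})$ coincide. So either all three restrictions $a_j^{(1)}, b_j^{(1)}, c_j^{(1)}$ are equal, or all three pairs share the same nonzero value $(i,\{0,1\})$. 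The latter forces $b$ and $c$ to both equal $1-a$ at coordinate $i$ within block $j$, hence to agree there, contradicting the claim that the $b,c$ restrictions first differ at index $i$. Therefore $a=b=c$.

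For the inductive step, assume that $c_{p-1}$ is a $(p+2,p+1)$-coloring for every admissible choice of parameters. Suppose for contradiction that $V\subseteq\{0,1\}^{\alpha}$ is a set of $p+3$ vertices on which $c_p$ uses at most $p+1$ colors. Since $c_p=\xi_p\times c_{p-1}$ refines $c_{p-1}$, the coloring $c_{p-1}$ also uses at most $p+1$ colors on $V$. Applying the inductive hypothesis to any $(p+2)$-subset of $V$ forces $c_{p-1}$ to use at least, and hence exactly, $p+1$ colors on $V$; the $c_p$- and $c_{p-1}$-color classes of edges inside $V$ therefore coincide, so $\xi_p(u,v)$ depends only on $c_{p-1}(u,v)$ for $u,v\in V$. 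Moreover, for every vertex $v\in V$, no $c_{p-1}$-color class on $V$ can consist entirely of edges incident to $v$: otherwise removing $v$ would leave $c_{p-1}$ using at most $p$ colors on the remaining $p+2$ vertices, contradicting the inductive hypothesis.

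From this rigidity I plan to extract a combinatorial contradiction by descending to a single resolution-$(p+1)$ block position $j^\star$. The target is to locate $j^\star$ and a small sub-collection $V'\subseteq V$ so that the $\xi_p$-records on $V'$ at block $j^\star$ reproduce the base-case impossible configuration at resolution $p$: three pairs in $V'$ sharing a common nonzero $\eta_p$-value $(i,\{x,y\})$ at block $j^\star$. The base-case parity argument then replays inside the resolution-$p$ sub-block indexed by $i$, with $\{0,1\}$ replaced by the two resolution-$(p-1)$ sub-blocks $x$ and $y$: the three pairs $\{x,y\}$ collisions force a coincidence that contradicts the claim that $\eta_p = (i,\{x,y\})$ on all three. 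The forced correspondence $\xi_p\leftrightarrow c_{p-1}$ on $V$ is exactly the tool that should make such a triangle unavoidable.

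The main obstacle, and the reason the proof is described as \emph{quite technical}, is this bridging step: the inductive hypothesis controls only color counts, whereas the contradiction requires a concrete structural coincidence at a specific block. Converting the color-class rigidity into the desired sub-configuration appears to require auxiliary properties of $c_p$ — the content of Section \ref{sec:properties} — that describe how agreement and disagreement patterns at consecutive resolutions must nest along the hierarchy $r_0\mid r_1\mid\cdots\mid r_{p+1}$. A pigeonhole or averaging argument over the $\approx\alpha/r_{p+1}$ top-level block positions, together with the structural lemmas and the $c_p\leftrightarrow c_{p-1}$ coincidence on $V$, should be what pinpoints the block $j^\star$ and the triangle on which the base-case parity obstruction finally manifests.
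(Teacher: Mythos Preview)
Your setup is correct and clean: inducting on $p$, you correctly deduce that if $c_p$ uses at most $p+1$ colors on a $(p+3)$-set $V$ then $c_{p-1}$ uses exactly $p+1$ colors, the $c_p$- and $c_{p-1}$-partitions of the edges of $V$ coincide, and hence $\xi_p$ is constant on each $c_{p-1}$-class. The ``no star'' observation is also valid. The base case is fine.

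The gap is exactly where you say it is, and it is fatal as stated. What you have established is only that the \emph{vector} $\xi_p$ takes at most $p+1$ values on the edges of $V$. To run your parity contradiction you need a single block index $j^\star$ and a triangle $a,b,c$ on which the $j^\star$-th \emph{coordinate} $\eta_p\big(a_{j^\star}^{(p+1)},b_{j^\star}^{(p+1)}\big)$, etc., are all equal and nonzero. Nothing you have proved localizes a coincidence to one coordinate: the $p+1$ global $\xi_p$-values can differ from each other in many coordinates simultaneously, and within a single coordinate the map $\eta_p$ may take far more than $p+1$ values across the edges of $V$. The ``no star'' property likewise says nothing about any individual coordinate. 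So the pigeonhole/averaging you allude to has no quantity to average: you have $p+1$ vectors of length about $\alpha/r_{p+1}$ with no control on how they differ coordinatewise, and you are trying to force three edges forming a triangle to agree in one coordinate. There is no evident mechanism for this, and the structural lemmas of Section~\ref{sec:properties} (which only say that $c_p$, $\xi_d$, $\eta_d$ behave well under truncation) do not supply one.

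For comparison, the paper's proof does not induct on $p$ at all; it inducts on $\alpha$. For a fixed $p$ and a set $S$ with $|S|\le p+3$, it truncates to the largest $\alpha_d<\alpha$ divisible by $r_d$ and splits the colors into \emph{inherited} colors $C_I^{(d)}$ (pairs that already differ on the prefix) and \emph{emerging} colors $C_E^{(d)}$ (pairs that agree on the prefix). The inductive hypothesis on $\alpha_d$ gives $|C_I^{(d)}|\ge |S'|-1$ with $S'=\pi_{\alpha_d}(S)$, and the work is to upgrade this to $|C_I^{(d)}|\ge |S'|-1+\sum_{c\in C_E^{(d)}}(\mu_c-1)$, which together with a trivial bound on $|C_E^{(d)}|$ yields $|S|-1$ colors. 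The crucial step is Claim~\ref{claim:index}: one shows $|C_E^{(1)}|\le\cdots\le|C_E^{(p)}|\le p$ via natural injections $\jmath_d$, and then pigeonhole on this chain of at most $p$ integers between $2$ and $p$ produces a level $d$ where $\jmath_d$ is a bijection, which is exactly the injectivity of $\eta_{d-1}$ on $C_E^{(d)}$ needed for the upgrade. This pigeonhole over the \emph{resolution levels} $d=1,\ldots,p$ is the genuine use of the depth of the recursion, and it is the idea your sketch is missing.
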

For integers $n$ of the form $n=2^{\beta^{p+1}}$, Theorem \ref{thm:main}
follows from Theorems \ref{thm:main1} and \ref{thm:main2}. For general
$n \ge p+3 \ge 4$, first notice that if $n^2 < 2^{16p(\log n)^{1-1/(p+1)}\log \log n}$, then
the statement is trivially true, as we may color each edge with
different colors. Hence, we may assume that the inequality does not hold,
from which it follows that 
\[ 2\log n \ge 16p(\log n)^{1-1/(p+1)}\log \log n \ge 16p(\log n)^{1-1/(p+1)}\] 
and $n \ge 2^{(8p)^{p+1}}$. Hence, there exists an integer of the form $2^{\beta^{p+1}}$
which is at most $n^{(1+1/8p)^{p+1}} \le n^2$. Therefore, there exists a $(p+3,p+2)$-coloring
of the complete graph on the vertex set $[n]$ using at most 
\[
2^{4(2\log n)^{1-1/(p+1)}\log(2\log n)}
\le 2^{4 \cdot 2(\log n)^{1-1/(p+1)} (1+\log\log n)}
\le2^{16(\log n)^{1-1/(p+1)}\log\log n}
\]
 colors (in the second inequality we used the fact that $\log\log n\ge\log \log 4 \ge1$).
 Thus we obtain Theorem \ref{thm:main}. Theorem \ref{thm:main1} is proved in Subsection \ref{subsect:numbcolors}, while Theorem \ref{thm:main2} is proved in Section \ref{sect:proofmainthm} and  builds on the two sections leading up to it.

\subsection{Example}

Let us illustrate the coloring by working out a small example. Suppose
that $r_{1}=2$ and $r_{2}=4$. Let $v=(0,0,1,0,1,1,0)$ and $w=(0,0,1,1,1,0,0)$
be vectors in $\{0,1\}^{7}$. Then
\begin{align*}
v & =(0,0,1,0,1,1,0)=\Big(\QS 0,0 \QT, \QS 1,0 \QT, \QS 1,1 \QT, \QS 0 \QT\Big)
                    =\Big(\QS 0,0,1,0 \QT,\QS 1,1,0 \QT\Big),
\end{align*}
where the quotation marks indicate the blocks of each
resolution. Similarly,
\begin{align*}
w & =(0,0,1,1,1,0,0)=\Big(\QS 0,0\QT,\QS 1,1\QT,\QS 1,0\QT,\QS 0\QT\Big)=\Big(\QS 0,0,1,1\QT,\QS 1,0,0\QT\Big).
\end{align*}
The function $\eta_{0}$ records the first pair of blocks of resolution
0 which are different. So 
\[
\eta_{0}(v,w)=(4,\{0,1\}),
\]
where the value of the first coordinate, $4$, indicates that $v$
and $w$ first differ in the fourth coordinate. Similarly, the function
$\eta_{1}$ will record the first pair of blocks of resolution 1 which
are different. So 
\[
\eta_{1}(v,w)=\Big(2,\{(1,0),(1,1)\}\Big).
\]

Computing $\xi_{0}$ and $\xi_{1}$ involves one more step. To compute
$\xi_{0}$, we apply $\eta_{0}$ to each pair of blocks of resolution
1. Therefore, 
\begin{align*}
\xi_{0}(v,w) & =\Big(\eta_{0}\big((0,0),(0,0)\big),\eta_{0}\big((1,0),(1,1)\big),\eta_{0}\big((1,1),(1,0)\big),\eta_{0}\big((0),(0)\big)\Big)\\
 & =\big(0,(2,\{0,1\}),(2,\{1,0\}),0\big),
\end{align*}
which is a vector of length four. 

Similarly, to compute $\xi_{1}$,
we apply $\eta_{1}$ to each pair of blocks of resolution 2. Therefore,
\begin{align*}
\xi_{1}(v,w) & =\Big(\eta_{1}\big((0,0,1,0),(0,0,1,1)\big),\eta_{1}\big((1,1,0),(1,0,0)\big)\Big)\\
 & =\left(\Big(2,\big\{(1,0),(1,1)\big\}\Big),\Big(1,\big\{(1,1),(1,0)\big\}\Big)\right),
\end{align*}
which is a vector of length two. 

\subsection{Number of colors}\label{subsect:numbcolors}

In this subsection, we prove Theorem \ref{thm:main1}.

\begin{proof}[Proof of Theorem \ref{thm:main1}]

Recall that $\beta$ is a positive integer greater than $1$ and
$r_{d}=\beta^{d}$ for $0\le d\le p+1$. Let $\alpha=\beta^{p+1}$.
The goal here is to give an upper bound on the number of colors in the edge-coloring $c_p$ of the complete graph with vertex set $\{0,1\}^{\alpha}=\{0,1\}^{\beta^{p+1}}$.
First, for $0\le d\le p$, the function $\eta_{d}$ outputs either
zero or an index and a pair of distinct blocks of resolution $d$. Hence, there
are at most $1+\alpha\cdot2^{r_{d}}(2^{r_d} -1) \le \alpha2^{2\beta^{d}}$ possible
outcomes for the function $\eta_{d}$. Second, for $0\le d\le p$,
the function $\xi_{d}$ is a product of $\frac{\alpha}{r_{d+1}}=\beta^{p-d}$
outcomes of $\eta_{d}$. Hence, there are at most 
\[
\big(\alpha\cdot2^{2\beta^{d}}\big)^{\beta^{p-d}}
= \beta^{(p+1)\beta^{p-d}}\cdot2^{2\beta^{p}}
\]
possible outcomes for the function $\xi_{d}$. Since $c_{p}$ is defined
as $\xi_{p}\times\xi_{p-1}\times\cdots\times\xi_{0}$, the total number
of colors used in $c_{p}$ is at most 
\begin{align*}
\prod_{d=0}^{p} \left(\beta^{(p+1)\beta^{p-d}}\cdot2^{2\beta^{p}}\right)
\le \beta^{2(p+1)\beta^{p}} 2^{2(p+1)\beta^p} \le 2^{4(p+1)\beta^p\log \beta}.
\end{align*}
Let $n=2^{\alpha}=2^{\beta^{p+1}}$ and note that $\beta^{p}=(\log n)^{1-1/(p+1)}$
and $\log\beta=\frac{1}{p+1}\log\log n$. Thus, we have colored the edges
of the complete graph on $n$ vertices using at most 
\[
2^{4(\log n)^{1-1/(p+1)}\log\log n}
\]
colors, as claimed in Theorem \ref{thm:main1}.
\end{proof}

\medskip{}

As we saw in Subsection \ref{sub:Mubayi}, for large enough $q$,
Mubayi's coloring (which is similar to $c_{1}$) is not a $(q,q-1)$-coloring
or even a $(q,q^{\varepsilon})$-coloring for any fixed $\varepsilon>0$.
Similarly, we can see that the same is true for the coloring $c_{p}$ 
for every fixed $p$ (we will briefly describe the proof
of this fact in Subsection \ref{sec:conclude_top_down}). 
This explains why we need to consider $c_{p}$ with an increasing value of $p$. 

\section{Outline of proof}
\label{sec:outline}

In this section, we outline the proof of Theorem \ref{thm:main2}.
Assume that we want to prove that the edge-coloring of the complete
graph on the vertex set $\{0,1\}^{\alpha}$ given by $c_{p}$ is a
$(p+3,p+2)$-coloring. We will use induction on $\alpha$ to prove the stronger statement that the coloring is a $(q, q-1)$ coloring for all $q \leq p +3$.
To illustrate a simple case, assume that we are about to prove it
for $\alpha=r_{p+1}$ and have proved it for all smaller values of $\alpha$.
Let $S\subset\{0,1\}^{\alpha}$ be a given set of size at most $p+3$. We 
wish to show that the edges of $S$ receive at least $|S| - 1$ distinct colors.

Let $\alpha' = r_{p+1} - r_p$. For two vectors $v,w\in S$ satisfying $v\neq w$, let $v=(v',v'')$
and $w=(w',w'')$ where $v',w'\in\{0,1\}^{\alpha'}$ and 
$v'',w''\in \{0,1\}^{\alpha-\alpha'} = \{0,1\}^{r_{p}}$. 
Note that since $\alpha' = r_{p+1}-r_p$ is divisible by
$r_{p}$, the first $\frac{\alpha'}{r_p}$ blocks of resolution
$p$ of $v$ are identical to those of $v'$ and a similar fact
holds for $w$ and $w'$. 

If $v'=w'$ then, by the observation above, the first $\frac{\alpha'}{r_p}$
coordinates of $\xi_{p-1}$ are all zero. On the other hand, if $v'\neq w'$,
then the first block of resolution $p$ on which $v$ and
$w$ differ is one of the first $\frac{\alpha'}{r_p}$ blocks. Hence, in this case,
at least one of the first $\frac{\alpha'}{r_p}$ coordinates
of $\xi_{p-1}$ is non-zero. Thus, if we define sets $\Lambda_{I}$
and $\Lambda_{E}$ as 
\[
\Lambda_{I}=\big\{ c_{p}(v,w)\,:\, v'\neq w',\, v,w\in S\big\}
\]
and
\[
\Lambda_{E}=\big\{ c_{p}(v,w)\,:\, v'=w',\, v\neq w,\, v,w\in S\big\},
\]
then we have $\Lambda_{I}\cap\Lambda_{E}=\emptyset$. Hence, it suffices
to prove that $|\Lambda_{I}|+|\Lambda_{E}|\ge|S|-1$. The index `I'
stands for \emph{inherited colors} and `E' stands for \emph{emerging
colors}. 

The coloring $c_{p}$ contains more information
than necessary to prove that the number of colors is large. Hence, we consider only part of the coloring $c_{p}$. The part of 
the coloring that we consider for $\Lambda_{I}$ and $\Lambda_{E}$ will be different,
as we would like to highlight different aspects of our coloring
depending on the situation.

Define the sets $C_{I}$ and $C_{E}$ as
\[
C_{I}=\Big\{\big(c_{p}(v',w'),\eta_{p-1}(v'',w'')\big)\,:\, v' \neq w',\, v,w\in S\Big\}
\]
and 
\[
C_{E}=\Big\{\{v'',w''\}\,:\, v'=w',\, v'' \neq w'',\, v,w\in S\Big\}.
\]
We claim here without proof that $|C_{I}|\le|\Lambda_{I}|$ and $|C_{E}|\le|\Lambda_{E}|$.
Abusing notation,
for two vectors $v,w\in S$, we will from now on refer to the color
between $v$ and $w$ as the corresponding `color' in $C_{I}$ or
$C_{E}$. It now suffices to prove that $|C_{I}|+|C_{E}|\ge|S|-1$.

To analyze the colors in $C_{I}$ and $C_{E}$, we take a step back
and consider the first $\alpha'$ coordinates of the vectors in $S$.
Let $S'=\pi_{\alpha'}(S)$. Note that $S'$ is the collection of vectors
$v'$ in the notation above. There is a certain `branching
phenomenon' of vectors and colors. For a vector $v'\in S'$, let $T_{v'}=\{v\,:\,\pi_{\alpha'}(v)=v',\, v\in S\}$.
Hence, $T_{v'}$ is the set of vectors in $S$ whose first $\alpha'$
coordinates are equal to $v'$. Note that
\begin{equation}
\sum_{v'\in S'}|T_{v'}|=|S|.\label{eq:outline_1}
\end{equation}

Consider two vectors $v,w\in S$. If $v$ and $w$ are both in the same
set $T_{v'}$, then the color between $v$ and $w$ belongs to $C_{E}$
and if they are in different sets, then the color between $v$
and $w$ belongs to $C_{I}$. For a color $c\in C_{I}$, note that
the first coordinate of $c$ is of the form $c_{p}(v',w')$ for two
vectors $v',w'\in S'$. Further note that $c_{p}(v',w')$ is the color of an
edge that lies within $S'$. Hence, $c$ is a `branch' of some color
of an edge that lies within $S'$. In particular, by induction on
$\alpha$, we see that 
\begin{equation}
|C_{I}|\ge|S'|-1.\label{eq:outline_2}
\end{equation}

For a color $c\in C_{E}$, let $\mu_{c}$ be the number of (unordered)
pairs of vectors $v,w$ such that $c$ is the color between $v$ and
$w$. We have the following equation
\begin{equation}
\sum_{c\in C_{E}}\mu_{c}=\sum_{v'\in S'}{|T_{v'}| \choose 2}\ge\sum_{v'\in S'}(|T_{v'}|-1).\label{eq:outline_3}
\end{equation}

Let us first consider the simple case when $\mu_{c}=1$ for all $c\in C_{E}$
(that is, there are no overlaps between the emerging colors). In this
case, we have $|C_{E}|=\sum_{c\in C_{E}}\mu_{c}$. By (\ref{eq:outline_2}),
we have 
\begin{align*}
|C_{I}|+|C_{E}| & \ge(|S'|-1)+|C_{E}|=(|S'|-1)+\sum_{c\in C_{E}}\mu_{c},
\end{align*}
which by (\ref{eq:outline_3}) and (\ref{eq:outline_1}) is at least
\[
(|S'|-1)+\sum_{v'\in S'}(|T_{v'}|-1)=\big(\sum_{v'\in S'}|T_{v'}|\big)-1=|S|-1
\]
and thus the conclusion follows for the case when $\mu_c =1$ for all $c \in C_E$.

However, there might be some overlap between the emerging
colors. Note that there are $|C_{E}|$ emerging colors instead of the $\sum_{c\in C_{E}}\mu_{c}$
which we obtain by counting with multiplicity. Thus, there are $\sum_{c\in C_{E}}(\mu_{c}-1)$
`lost' emerging colors. Our key lemma asserts that every lost emerging
color will be accounted for by contributions towards $|C_{I}|$. Formally,
we will improve \eqref{eq:outline_2} and obtain the following inequality
\begin{equation}
|C_{I}|\ge(|S'|-1)+\sum_{c\in C_{E}}(\mu_{c}-1).\label{eq:outline_4}
\end{equation}
Given this inequality, we will have 
\[
|C_{I}|+|C_{E}|\ge(|S'|-1)+\sum_{c\in C_{E}}(\mu_{c}-1)+|C_{E}|=(|S'|-1)+\sum_{c\in C_{E}}\mu_{c},
\]
which, as above, implies that $|C_{I}|+|C_{E}|\ge|S|-1$.

\medskip{}

We conclude this section with a sketch of the proof of (\ref{eq:outline_4}).
To see this, we further study the branching of the colors. Define
$C_{B}$ as the set of colors that appear within the set $S'$, that is,
\[
C_{B}=\big\{ c_{p}(v',w')\,:\, v',w'\in S'\big\},
\]
where the index `B' stands for \emph{base colors}.
Every color $c\in C_{I}$ is of the form $c=(c',?)$, where $c'\in C_{B}$ and the question mark
`?' stands for an unspecified coordinate.
Thus, we immediately have at least $|C_{B}|$ colors in $C_{I}$ (this is the content
of Equation \eqref{eq:outline_2}).
Now take a color $c''=\{v'',w''\}\in C_{E}$
and suppose that $c''$ has multiplicity $\mu_{c''}$. Then there exist
vectors $x_{i}\in S'$ for $i=1,2,\ldots,\mu_{c''}$ such that $c''$
is the color between $(x_{i},v'')$ and $(x_{i},w'')$. Consider
the colors of the two pairs $\big((x_{1},v''),(x_{2},v'')\big)$
and $\big((x_{1},v''),(x_{2},w'')\big)$ in $C_I$. These are 
\begin{align*}
\big(c_{p}(x_{1},x_{2}),\eta_{p-1}(v'',v'')\big) & =(c_{1,2},0)\in C_{I} \quad\textrm{and}\\
\big(c_{p}(x_{1},x_{2}),\eta_{p-1}(v'',w'')\big) & =\big(c_{1,2},\eta_{p-1}(c'')\big)\in C_{I},
\end{align*}
respectively, where $c_{1,2}\in C_{B}$ (here we abuse notation and 
define $\eta_{p-1}(c'')=\eta_{p-1}(v'',w'')$, which is allowed since
the right-hand-side is symmetric in the two input coordinates). 
Note that by the inductive hypothesis,
there are at least $\mu_{c''}-1$ distinct colors of the form $c_{i,j}$
for distinct pairs of indices $i$ and $j$. Hence, by considering
these colors, we add colors of the types $(c_{i,j},0)$ and $(c_{i,j},\eta_{p-1}(c''))$
for at least $\mu_{c''}-1$ distinct colors $c_{i,j}\in C_{B}$. Even if one
of these two colors equals the color $(c_{i,j},?)$
counted above, we have added at least $\mu_{c''}-1$ colors to $C_{I}$ by
considering the color $c''\in C_{E}$.

Now consider another color $c_{1}''\in C_{E}$. This color
adds a further $\mu_{c_{1}''}-1$
colors to $C_{I}$ as long as $\eta_{p-1}(c'')\neq\eta_{p-1}(c_{1}'')$.
Therefore, if we can somehow guarantee that $\eta_{p-1}(c'')$ is distinct
for all $c''$, then we have 
\[
|C_{I}|\ge|C_{B}|+\sum_{c\in C_{E}}(\mu_{c}-1),
\]
which proves (\ref{eq:outline_4}), since $|C_{B}|\ge|S'|-1$ by the
inductive hypothesis.

Hence, it would be helpful to have distinct
$\eta_{p-1}(c'')$ for each $c''\in C_{E}$. Even though we cannot
always guarantee this, we can show that there exists a resolution in which
the corresponding fact does hold. This will be explained in more detail in Section \ref{sect:proofmainthm}.

\section{Properties of the coloring}
\label{sec:properties}

In this section, we collect some useful facts about the coloring functions
$c_{d}$. Before listing these properties,
we introduce the formal framework that we will use to describe
them.

\subsection{Refinement of functions\label{sub:refinement}}

For a function $f\,:\, A\rightarrow B$, let $\Pi_{f}=\{f^{-1}(b)\,:\, b\in f(A)\}$. Thus, $\Pi_{f}$ is a partition of $A$ into sets whose elements map by $f$ to the same element in $B$. For two functions $f$ and $g$ defined over the same domain, we say
that $f$ \emph{refines} $g$ if $\Pi_{f}$ is a refinement of $\Pi_{g}$.
This definition is equivalent to saying that $f(a)=f(a')$ implies
that $g(a)=g(a')$ and is also equivalent to saying that there exists
a function $h$ for which $g=h\circ f$. The term $f$ refines\emph{
}$g$ is also referred to as $g$ \emph{factors through} $f$ in category
theory. This formalizes the concept that $f$ contains
more information than $g$.

For two functions $f$ and $g$ defined over the same domain $A$,
let $f\times g$ be the function defined over $A$ where $(f\times g)(a)=(f(a),g(a))$.
The following proposition collects several basic properties of refinements
of functions which will be useful in the proof of the main theorem. 

\begin{prop}
\label{prop:refinement}Let $f_{1},f_{2},f_{3}$ and $f_{4}$ be functions defined
over the domain $A$.

(i) \emph{(Identity)} $f_1$ refines $f_1$.

(ii) \emph{(Transitivity)} If $f_{1}$ refines $f_{2}$ and $f_{2}$
refines $f_{3}$, then $f_{1}$ refines $f_{3}$. 

(iii) If $f_1$ refines $f_3$, 
then $f_{1}\times f_{2}$ refines $f_{3}$. 

(iv) If $f_1$ refines both $f_2$ and $f_3$, 
then $f_{1}$ refines $f_{2}\times f_{3}$. 

(v) If $f_1$ refines $f_3$ and $f_2$ refines $f_4$, 
then $f_{1}\times f_{2}$ refines $f_{3}\times f_{4}$. 

(vi) If $f_{1}$ refines $f_{2}$, then, for all $A'\subset A$,
we have $|f_{1}(A')|\ge|f_{2}(A')|$. \end{prop}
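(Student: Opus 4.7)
The plan is to work entirely with the pointwise characterization already recorded in the text: $f$ refines $g$ if and only if $f(a)=f(a')$ implies $g(a)=g(a')$, equivalently if and only if $g=h\circ f$ for some function $h$. Every one of the six assertions then reduces to a short implication chase, and I would simply verify them in the order stated.

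For (i), the implication $f_1(a)=f_1(a')\Rightarrow f_1(a)=f_1(a')$ is tautological. For (ii), given $f_1(a)=f_1(a')$, the first hypothesis yields $f_2(a)=f_2(a')$ and the second then yields $f_3(a)=f_3(a')$. For (iii), agreement of $f_1\times f_2$ at $a,a'$ forces, in particular, $f_1(a)=f_1(a')$, hence $f_3(a)=f_3(a')$ by hypothesis. For (iv), agreement of $f_1$ at $a,a'$ gives agreement of both $f_2$ and $f_3$, hence agreement of $f_2\times f_3$ by the very definition of the product function.

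For (v), rather than arguing from scratch I would combine the earlier parts: by (iii) applied to the triple $(f_1,f_2,f_3)$, the product $f_1\times f_2$ refines $f_3$; the same reasoning with the two coordinates swapped (note that agreement of $f_1\times f_2$ also forces $f_2(a)=f_2(a')$, and hence $f_4(a)=f_4(a')$) shows that $f_1\times f_2$ refines $f_4$. Applying (iv) with $f_1$ there replaced by $f_1\times f_2$ delivers the conclusion. For (vi), I would use the composition formulation: writing $f_2=h\circ f_1$, one has $f_2(A')=h(f_1(A'))$, and since $h$ is a function the image of a set of size $|f_1(A')|$ contains at most $|f_1(A')|$ elements.

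The only mildly delicate point is part (v), where the statement of (iii) is not manifestly symmetric in the two factors of $f_1\times f_2$; the remedy is the one-line observation above that agreement of the product forces agreement of each coordinate, which gives a symmetric version of (iii) essentially for free. Beyond that, the entire proposition is a bookkeeping exercise with no substantive content other than unpacking the definition of refinement, so I do not anticipate any real obstacles.
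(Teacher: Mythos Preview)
Your proposal is correct and follows essentially the same route as the paper: both proofs reduce everything to the pointwise/partition characterization of refinement and combine (iii) and (iv) to obtain (v). The only cosmetic difference is in (vi), where the paper restricts the partitions $\Pi_{f_i}$ to $A'$ and compares their sizes, whereas you use the composition formulation $f_2=h\circ f_1$ directly; these are equivalent one-line arguments.
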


\begin{proof}
Let $\Pi_{i}=\Pi_{f_{i}}$ for $i=1,2,3$.

(i) This is trivial since $\Pi_{1}$ refines $\Pi_{1}$.

(ii) If $f_{1}$ refines $f_{2}$ and $f_{2}$ refines $f_{3}$, then
$\Pi_{1}$ refines $\Pi_{2}$ and $\Pi_{2}$ refines $\Pi_{3}$. Therefore,
$\Pi_{1}$ refines $\Pi_{3}$ and $f_{1}$ refines $f_{3}$.

(iii) Since $f_1 \times f_2$ clearly refines $f_1$, this follows from (ii).

(iv) If $f_{1}(a) = f_{1}(a')$, then $f_{2}(a)=f_{2}(a')$
and $f_{3}(a)=f_{3}(a')$. Hence, $(f_2 \times f_3)(a) = (f_2 \times f_3)(a')$ and
we conclude that $f_1$ refines $f_2 \times f_3$.

(v) By (iii), $f_1 \times f_2$ refines both $f_3$ and $f_4$. Therefore, by (iv),
$f_1 \times f_2$ refines $f_3 \times f_4$.

(vi) For $i=1,2$, let $\Pi_{i}\vert_{A'}=\{X\cap A'\,:\, X\in\Pi_{i},X\cap A'\neq\emptyset\}$
and note that $|f_{i}(A')|=\Big|\Pi_{i}\vert_{A'}\Big|$. Since
$\Pi_{1}$ is a refinement of $\Pi_{2}$, we see that $\Pi_{1}\vert_{A'}$
is a refinement of $\Pi_{2}\vert_{A'}$. Therefore, it follows that
\[
|f_{1}(A')|=\Big|\Pi_{1}\vert_{A'}\Big|\ge\Big|\Pi_{2}\vert_{A'}\Big|=|f_{2}(A')|,
\]
as required.
\end{proof}

Refinements arise in our proof because we often consider colorings with less information than the full coloring. In the outline above, we considered several different sets of colors, namely, $\Lambda_I$, $\Lambda_E$, $C_I$ and $C_E$ and we claimed without proof that $|C_I| \leq |\Lambda_I|$ and $|C_E| \leq |\Lambda_E|$. If we can show that $\Lambda_I$ is a refinement of $C_I$ and $\Lambda_E$ is a refinement of $C_E$, then these inequalities follow from Proposition \ref{prop:refinement} (vi) above.

\subsection{Properties of the coloring} 



We developed our formal framework for a rigorous 
treatment of the following two lemmas. It may be helpful at this stage to recall the definitions of $\eta_d$, $\xi_d$ and $c_{d}$ from Subsection \ref{subsect:defcolor}.

\begin{lem}
\label{lem:monotone_proj} Suppose that $\alpha$, $\alpha'$ and $d$
are integers with $d \geq 0$ and $1 \leq \alpha'\le\alpha$. Then the following hold
(where all functions are considered as defined over $\{0,1\}^{\alpha}\times\{0,1\}^{\alpha}$):

(i) $\eta_{d}$ refines $\eta_{d}\circ(\pi_{\alpha'}\times\pi_{\alpha'})$. 

(ii) $\xi_{d}$ refines $\xi_{d}\circ(\pi_{\alpha'}\times\pi_{\alpha'})$.

(iii) $c_{d}$ refines $c_{d}\circ(\pi_{\alpha'}\times\pi_{\alpha'})$.
\end{lem}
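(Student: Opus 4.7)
The plan is to prove the three parts in order, with (ii) following from (i) plus the product definition of $\xi_d$, and (iii) following from (ii) plus the product definition of $c_d$. Essentially all substantive content will be in (i); the remaining parts will be formal consequences via Proposition \ref{prop:refinement}(v).

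For (i), I will show directly that the value of $\eta_{d}(v,w)$ determines the value of $\eta_{d}(\pi_{\alpha'}(v),\pi_{\alpha'}(w))$. Writing $\alpha'=a'_{d}r_{d}+b'_{d}$, the key observation is that the resolution-$d$ decomposition of $\pi_{\alpha'}(v)$ consists of the full blocks $v_{1}^{(d)},\ldots,v_{a'_{d}}^{(d)}$ followed by a last block equal to the first $b'_{d}$ coordinates of $v_{a'_{d}+1}^{(d)}$. I will then split into cases according to the index $i$ (and first whether $v=w$) recorded by $\eta_{d}(v,w)$. When $v=w$, both sides vanish. When $i\le a'_{d}$, the first differing block sits entirely inside the projection, so $\eta_{d}(\pi_{\alpha'}(v),\pi_{\alpha'}(w))=\eta_{d}(v,w)$. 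When $i>a'_{d}+1$, every resolution-$d$ block of $\pi_{\alpha'}(v)$ agrees with the corresponding block of $\pi_{\alpha'}(w)$, so the output is $0$. Finally, when $i=a'_{d}+1$, the projected last blocks are the length-$b'_{d}$ truncations of $v_{i}^{(d)}$ and $w_{i}^{(d)}$, and the output is either $0$ (if these truncations agree) or the pair $(a'_{d}+1,\{\text{truncations}\})$; in either case it is determined by the unordered pair $\{v_{i}^{(d)},w_{i}^{(d)}\}$ that $\eta_{d}(v,w)$ records.

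For (ii), recall that $\xi_{d}(v,w)$ is by definition the tuple of $\eta_{d}$ values on the pairs of corresponding resolution-$(d+1)$ blocks of $v$ and $w$. For $i\le a'_{d+1}$ the $i$-th such block of $\pi_{\alpha'}(v)$ equals $v_{i}^{(d+1)}$ verbatim, and for $i=a'_{d+1}+1$ it is the length-$b'_{d+1}$ prefix of $v_{a'_{d+1}+1}^{(d+1)}$. Reading off the coordinates of the first type directly and applying (i) to those of the second type, and then combining via Proposition \ref{prop:refinement}(iv)--(v), yields that $\xi_{d}$ refines $\xi_{d}\circ(\pi_{\alpha'}\times\pi_{\alpha'})$. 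For (iii), projection commutes with products in the sense that $c_{d}\circ(\pi_{\alpha'}\times\pi_{\alpha'})$ equals the product of the $\xi_{i}\circ(\pi_{\alpha'}\times\pi_{\alpha'})$ over $0\le i\le d$, so iterating Proposition \ref{prop:refinement}(v) while applying (ii) to each factor closes the argument.

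The only step where a genuine calculation is required is the sub-case $i=a'_{d}+1$ of (i): the first differing block may coincidentally agree on its length-$b'_{d}$ prefix and only differ beyond it, and one must check that the resulting value of $\eta_{d}$ on the truncations is still encoded by the unordered pair $\{v_{i}^{(d)},w_{i}^{(d)}\}$. Everything else is routine block-level bookkeeping combined with a direct appeal to Proposition \ref{prop:refinement}.
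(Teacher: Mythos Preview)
Your proposal is correct and follows essentially the same route as the paper: for (i) you show that $\eta_d(v,w)$ determines $\eta_d(\pi_{\alpha'}(v),\pi_{\alpha'}(w))$ by a case analysis on the recorded index $i$ (the paper phrases the split in terms of comparing $\alpha'$ with $i\cdot r_d$ and with the first differing coordinate $x$, but this is equivalent to your split at $a'_d$ and $a'_d+1$); parts (ii) and (iii) then follow by the same coordinate-wise application of (i) and Proposition~\ref{prop:refinement}(v). The only point to make explicit when you write it up is that in your boundary case $i=a'_d+1$ the decision between output $0$ and output $(a'_d+1,\{\text{truncations}\})$ uses, beyond the unordered pair $\{v_i^{(d)},w_i^{(d)}\}$, only the fixed parameters $\alpha'$ and $r_d$ (to determine $b'_d$), which is exactly what the paper observes as well.
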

\begin{proof}
The case $\alpha'=\alpha$ is trivial so we assume that $\alpha'<\alpha$.

(i) Let $v$ and $w$ be vectors in $\{0,1\}^{\alpha}$ and let $v'=\pi_{\alpha'}(v)$
and $w'=\pi_{\alpha'}(w)$. We will show that one can compute the
value of $\eta_{d}(v',w')$ based only on the value of $\eta_{d}(v,w)$. This clearly implies the desired conclusion.

If $\eta_{d}(v,w)=0$, then $v=w$ and it follows that $\eta_{d}(v',w')=0$.
Assume then that $\eta_{d}(v,w)=(i,\{v_{i}^{(d)},w_{i}^{(d)}\})$ for
some index $i$ and blocks $v_{i}^{(d)},w_{i}^{(d)}$ of resolution
$d$. Let $j$ be the first coordinate in which the two vectors $v_{i}^{(d)}$
and $w_{i}^{(d)}$ differ. Then the first coordinate $x$ (note that
$1\le x\le\alpha$) in which $v$ and $w$ differ is $x=(i-1)\cdot r_{d}+j$
and satisfies 
\[
(i-1)\cdot r_{d}<x\le\min\{i\cdot r_{d},\alpha\}.
\]
Note that the values of $i$ and $j$ can be deduced from $\eta_{d}(v,w)$ and
hence $x$ can as well. It thus suffices to verify that $\eta_{d}(v',w')$ can be computed
using only $\alpha, \alpha', r_d$, $x$, $i$, $v_{i}^{(d)}$ and $w_{i}^{(d)}$.

If $\alpha'>i\cdot r_{d}$, then we have $\eta_{d}(v',w')=\eta_{d}(v,w)=(i,\{v_i^{(d)}, w_i^{(d)}\})$ and the claim is true.
On the other hand, if $\alpha'\le i\cdot r_{d}$, then there are
two cases. If $\alpha'<x$, then we have $v'=w'$. Therefore, $\eta_{d}(v',w')=0$
and the claim holds for this case as well. The final case is when $x\le\alpha'\le i\cdot r_{d}$.
In this case, we see that 
\[
\eta_{d}(v',w')=\left(i, \Big\{ \pi_{[\alpha'-(i-1)r_{d}]}(v_{i}^{(d)}),\pi_{[\alpha'-(i-1)r_{d}]}(w_{i}^{(d)}) \Big\} \right)
\]
and the claim holds. 

(ii) Let $v$ and $w$ be two vectors in $\{0,1\}^{\alpha}$. Then
\[
\xi_{d}(v,w)=\big(\eta_{d}(v_{1}^{(d+1)},w_{1}^{(d+1)}),\eta_{d}(v_{2}^{(d+1)},w_{2}^{(d+1)}),\ldots,\eta_{d}(v_{a+1}^{(d+1)},w_{a+1}^{(d+1)})\big),
\]
for some integer $a\ge0$. Let $v'=\pi_{\alpha'}(v)$ and $w'=\pi_{\alpha'}(w)$.
Suppose that $(j-1)r_{d+1}<\alpha'\le jr_{d+1}$. Then note that the
$j$-th block of resolution $d+1$ of $v'$ is $\pi_{[\alpha'-(j-1)r_{d+1}]}(v_{j}^{(d+1)})$
and that of $w'$ is $\pi_{[\alpha'-(j-1)r_{d+1}]}(w_{j}^{(d+1)})$. Then
$\xi_{d}(v',w')$ consists of $j$ coordinates, where for $1\le i<j$
the $i$-th coordinate is identical to the $i$-th coordinate of $\xi_{d}(v,w)$
and, for $i=j$, the $j$-th coordinate is 
\[
\eta_{d}\circ(\pi_{[\alpha'-(j-1)r_{d+1}]}\times\pi_{[\alpha'-(j-1)r_{d+1}]})(v_{j}^{(d+1)},w_{j}^{(d+1)}).
\]
Thus the function $\xi_{d}$ refines $\xi_{d}\circ(\pi_{\alpha'}\times\pi_{\alpha'})$
coordinate by coordinate (by part (i) of this lemma). Hence, by Proposition
\ref{prop:refinement}(v), we see that $\xi_{d}$ refines
$\xi_{d}\circ(\pi_{\alpha'}\times\pi_{\alpha'})$.

(iii) This follows from $c_{d}=\xi_{d}\times\cdots\times\xi_{0}$,
part (ii) of this lemma and Proposition \ref{prop:refinement}(v).
\end{proof}

Lemma \ref{lem:monotone_proj} seems intuitively obvious and might even seem 
trivial at first sight, but a moment's thought reveals the fact that it is
nontrivial. To see this, consider the function
\[ h_d(v,w) = \{v_i^{(d)}, w_i^{(d)}\}, \]
which is the projection to the second coordinate of $\eta_d(v,w)$.
Then the function $h_d$ fails to satisfy Lemma \ref{lem:monotone_proj}(i).
Moreover, if the functions $\xi_d$ and $c_d$ were built using $h_d$ instead of $\eta_d$, 
these would also fail to satisfy the claim of Lemma \ref{lem:monotone_proj}.

The next lemma completes the proof of one of the promised claims, namely, that $\Lambda_I$ (or, rather, a generalization thereof) refines $C_I$.

\begin{lem}
\label{lem:transform_color} Suppose that positive integers $d,p,\alpha$
and $\alpha'$ are given such that $1\le d\le p+1$ and $\alpha'$
is the maximum integer less than $\alpha$ divisible by $r_{d}$.
Let $\gamma_{d}$ be the function which takes a pair of vectors $v,w\in\{0,1\}^{\alpha}$
as input and outputs 
\[
\gamma_{d}(v,w)=(c_{p}(v',w'),\eta_{d-1}(v'',w'')),
\]
where $v=(v',v'')$ and $w=(w',w'')$ for $v',w'\in\{0,1\}^{\alpha'}$
and $v'',w''\in\{0,1\}^{\alpha-\alpha'}$. Then $c_{p}\vert_{\{0,1\}^{\alpha}\times\{0,1\}^{\alpha}}$
refines $\gamma_{d}$.\end{lem}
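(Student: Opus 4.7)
The plan is to reduce the claim to two refinement statements using Proposition \ref{prop:refinement}(iv): since $\gamma_d$ is defined as a product of two functions, it suffices to prove that $c_p$ refines each factor individually. Concretely, I would show that $c_p$ refines $(v,w) \mapsto c_p(v',w') = c_p \circ (\pi_{\alpha'} \times \pi_{\alpha'})(v,w)$ and that $c_p$ refines $(v,w) \mapsto \eta_{d-1}(v'',w'')$.

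The first refinement is immediate from Lemma \ref{lem:monotone_proj}(iii), which says exactly that $c_p$ refines $c_p \circ (\pi_{\alpha'} \times \pi_{\alpha'})$. So the work lies in the second refinement.

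For the second refinement, I would unpack the definition of $c_p = \xi_p \times \xi_{p-1} \times \cdots \times \xi_0$. Because $1 \le d \le p+1$, the index $d-1$ lies in $\{0,1,\ldots,p\}$, so $\xi_{d-1}$ is one of the factors of $c_p$. By Proposition \ref{prop:refinement}(iii), $c_p$ refines $\xi_{d-1}$. Now I need the key geometric observation: since $\alpha'$ is the largest multiple of $r_d$ strictly less than $\alpha$, writing $\alpha = a_d r_d + b_d$ with $1 \le b_d \le r_d$ forces $\alpha' = a_d r_d$, so in the decomposition $v = (v_1^{(d)}, \ldots, v_{a_d+1}^{(d)})$ into blocks of resolution $d$, the final block $v_{a_d+1}^{(d)}$ is exactly $v'' \in \{0,1\}^{b_d} = \{0,1\}^{\alpha-\alpha'}$, and similarly for $w$. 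Therefore the last coordinate of
\[
\xi_{d-1}(v,w) = \bigl(\eta_{d-1}(v_1^{(d)},w_1^{(d)}),\ldots,\eta_{d-1}(v_{a_d+1}^{(d)},w_{a_d+1}^{(d)})\bigr)
\]
is precisely $\eta_{d-1}(v'',w'')$. Projection onto a coordinate is a refinement, so $\xi_{d-1}$ refines the map $(v,w) \mapsto \eta_{d-1}(v'',w'')$, and by transitivity (Proposition \ref{prop:refinement}(ii)) so does $c_p$.

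Combining the two refinements via Proposition \ref{prop:refinement}(iv) yields that $c_p$ refines $\gamma_d$, as claimed. The only mildly delicate step is verifying that with the chosen $\alpha'$ the tail vector $v''$ coincides exactly with the last block of resolution $d$ of $v$; once this is noted, the rest is formal manipulation with the refinement calculus of Proposition \ref{prop:refinement}.
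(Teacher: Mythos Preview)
Your proposal is correct and follows essentially the same approach as the paper's own proof: split $\gamma_d$ into its two coordinates, handle the first via Lemma~\ref{lem:monotone_proj}(iii), and handle the second by identifying $\eta_{d-1}(v'',w'')$ as the last coordinate of $\xi_{d-1}(v,w)$ (using that $\alpha'$ is the largest multiple of $r_d$ below $\alpha$, so $v''$ is the final resolution-$d$ block), then combine via Proposition~\ref{prop:refinement}(ii)--(iv). Your explicit verification that $\alpha' = a_d r_d$ and hence $v'' = v_{a_d+1}^{(d)}$ is a welcome spelling-out of what the paper leaves implicit.
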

\begin{proof}
For brevity, we restrict the functions to the set $\{0,1\}^{\alpha}\times\{0,1\}^{\alpha}$
throughout the proof. By Lemma \ref{lem:monotone_proj}(iii), we
know that $c_{p}$ refines $c_{p}\circ(\pi_{\alpha'}\times\pi_{\alpha'})$
and hence $c_{p}$ refines the first coordinate of $\gamma_{d}$.
On the other hand, since $\alpha'$ is the maximum integer less than
$\alpha$ divisible by $r_{d}$, the term $\eta_{d-1}(v'',w'')$ forms
the last coordinate of the vector $\xi_{d-1}(v,w)$. Hence, by Proposition
\ref{prop:refinement}(iii), $\xi_{d-1}$ refines $\eta_{d-1}(v'',w'')$.
By the definition of $c_{p}$ and Proposition \ref{prop:refinement}(iii),
we know that $c_{p}$ refines $\xi_{d-1}$. Therefore, by transitivity
(Proposition \ref{prop:refinement}(ii)), we see that $c_{p}$ refines
$\eta_{d-1}(v'',w'')$. Thus, $c_{p}$ refines both coordinates
of $\gamma_{d}$ and hence, by Proposition \ref{prop:refinement}(iv), 
we see that $c_{p}$ refines $\gamma_{d}$.
\end{proof}

\section{Proof of the main theorem}\label{sect:proofmainthm}

In this section we prove Theorem \ref{thm:main2}, which asserts that
for all $\alpha\ge1$ and $p\ge1$, the edge-coloring of the complete
graph on the vertex set $\{0,1\}^{\alpha}$ given by $c_{p}$ is a
$(p+3,p+2)$-coloring. We will prove by induction on $\alpha$ that every
set $S$ with $|S| \leq p+3$ receives at least $|S| - 1$ distinct colors.
The base case is when $\alpha\le r_{p}$. In this case, for two distinct
vectors $v,w\in\{0,1\}^{\alpha}$, we have $\xi_{p}(v,w)=\big(\eta_{p}(v,w)\big)=\big((1,\{v,w\})\big).$
Hence, for a given set $S\subset\{0,1\}^{\alpha}$, the edges within
this set are all colored with distinct colors, thereby implying that
at least ${|S| \choose 2}\ge|S|-1$ colors are used. 

Now suppose that $\alpha>r_{p}$ is given and
the claim has been proved for all smaller values of $\alpha$. Let
$S\subset\{0,1\}^{\alpha}$ be a given set with $|S|\le p+3$.
For each $1\le d\le p$, let $\alpha_{d}$ be the largest integer
less than $\alpha$ which is divisible by $r_{d}$. Note that since
$r_{d-1}$ divides $r_{d}$ for all $1\le d\le p$, we have 
\[
\alpha_{p}\le\alpha_{p-1}\le\cdots\le\alpha_{1}.
\]

For $1\le d\le p$, define sets $\Lambda_{I}^{(d)}$ and $\Lambda_{E}^{(d)}$
as 
\[
\Lambda_{I}^{(d)}=\big\{ c_{p}(v,w)\,:\,\pi_{\alpha_{d}}(v)\neq\pi_{\alpha_{d}}(w),\, v,w\in S\big\}
\]
and
\[
\Lambda_{E}^{(d)}=\big\{ c_{p}(v,w)\,:\,\pi_{\alpha_{d}}(v)=\pi_{\alpha_{d}}(w),\, v\neq w,\, v,w\in S\big\}.
\]
Since $\alpha_{d}$ is divisible by $r_{d}$, if $\pi_{\alpha_{d}}(v)=\pi_{\alpha_{d}}(w)$,
then the first $\mbox{\ensuremath{\frac{\alpha_{d}}{r_{d}}}}$ coordinates
of $\xi_{d-1}(v,w)$ will all be zero. On the other hand, if $\pi_{\alpha_{d}}(v)\neq\pi_{\alpha_{d}}(w)$,
then this is not the case. Since $\xi_{d-1}$ is part of $c_{p}$,
this implies that $\Lambda_{I}^{(d)}\cap\Lambda_{E}^{(d)}=\emptyset$.
Hence, for all $d$, the number of colors within $S$ is 
exactly $|\Lambda_{I}^{(d)}|+|\Lambda_{E}^{(d)}|$.
It therefore suffices to prove that $|\Lambda_{I}^{(d)}|+|\Lambda_{E}^{(d)}|\ge|S|-1$
for some index $d$.

We would like to extract only the important information from the
colors in $\Lambda_{I}^{(d)}$ and $\Lambda_{E}^{(d)}$. For each
$1\le d\le p$ and a given pair of vectors $v,w\in S$, let $v=(v_{d}',v_{d}'')$
and $w=(w_{d}',w_{d}'')$ for $v_{d}',w_{d}'\in\{0,1\}^{\alpha_{d}}$
and $v_{d}'',w_{d}''\in\{0,1\}^{\alpha-\alpha_{d}}$. Define the sets
$C_{I}^{(d)}$ and $C_{E}^{(d)}$ as
\[
C_{I}^{(d)}=\Big\{\big(c_{p}(v_{d}',w_{d}'),\eta_{d-1}(v_{d}'',w_{d}'')\big)\,:\, v_{d}' \neq w_{d}',\, v,w\in S\Big\}
\]
and 
\[
C_{E}^{(d)}=\Big\{\{v_{d}'',w_{d}''\}\,:\, v_{d}'=w_{d}',\, v_{d}'' \neq w_{d}'',\, v,w\in S\Big\}.
\]

By Lemma \ref{lem:transform_color} and Proposition \ref{prop:refinement}(vi),
we see that $|C_{I}^{(d)}|\le|\Lambda_{I}^{(d)}|$. We also have 
$|C_{E}^{(d)}|\le|\Lambda_{E}^{(d)}|$. To see this,
suppose that a color $\{v_{d}'',w_{d}''\}\in C_{E}^{(d)}$ comes from
a pair of vectors $v=(v_{d}',v_{d}'')$ and $w=(w_{d}',w_{d}'')$
in $S$. Since $v_{d}'=w_{d}'$ and $\alpha_{d}$ is divisible by
$r_{d}$, the function $\eta_{d}$ applied to the last pair of blocks
of resolution $d+1$ of $v$ and $w$ is equal to $(i,\{v_{d}'',w_{d}''\})$
for some integer $i$.
Therefore, the last coordinate of $\xi_{d}(v,w)$ has value $(i, \{v_{d}'',w_{d}''\})$.
This implies that $|C_{E}^{(d)}|\le|\Lambda_{E}^{(d)}|$. Hence, it
now suffices to prove that $|C_{I}^{(d)}|+|C_{E}^{(d)}|\ge|S|-1$
for some index $1\le d\le p$.

Assume for the sake of contradiction that we have $|C_{I}^{(d)}|+|C_{E}^{(d)}|\le|S|-2$
for all $1\le d\le p$. The following is the key ingredient in our
proof.
\begin{claim}
\label{claim:index}
If $|C_{I}^{(p)}|+|C_{E}^{(p)}|\le|S|-2$, then
there exists an index $d$ such that $\eta_{d-1}(c)$
is distinct for each $c\in C_{E}^{(d)}$.
\end{claim}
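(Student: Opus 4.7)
I would prove the claim by contradiction. Suppose that for every $1 \le d \le p$, the function $\eta_{d-1}$ fails to be injective on $C_E^{(d)}$, so that there exist distinct colors $c_1^{(d)}, c_2^{(d)} \in C_E^{(d)}$ with $\eta_{d-1}(c_1^{(d)}) = \eta_{d-1}(c_2^{(d)})$. Combined with the hypothesis $|C_I^{(p)}| + |C_E^{(p)}| \le |S|-2$, the goal is to contradict the standing bound $|S| \le p+3$ (or, equivalently, to exhibit extra colors in $\Lambda_I^{(p)}$ beyond what the hypothesis allows).

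\textbf{Step 1.} First I would unpack what a single coincidence at resolution $d$ says structurally about $S$. Each color $c_j^{(d)} = \{u_j, u_j'\}$ arises from a pair of vectors in $S$ sharing some $\alpha_d$-prefix, and the equality $\eta_{d-1}(c_1^{(d)}) = \eta_{d-1}(c_2^{(d)})$ means that both pairs of $(\alpha-\alpha_d)$-suffixes first disagree in the same block index $i_d$ of resolution $d-1$, with the same unordered pair of $r_{d-1}$-subvectors at that block. This gives, at each resolution, a tightly constrained quadruple of vectors in $S$.

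\textbf{Step 2.} Next I would perform a case analysis depending on whether the disagreement block $i_d$ lies inside or outside the $\alpha_{d-1}$-prefix. In the inside case the pair in question contributes to $C_I^{(d-1)}$, and Lemma \ref{lem:transform_color} together with Proposition \ref{prop:refinement}(vi) converts this into extra distinct colors in $\Lambda_I^{(p)}$ that are not yet accounted for in the bound $|C_I^{(p)}| + |C_E^{(p)}| \le |S|-2$. In the outside case the quadruple in fact contributes to $C_E^{(d-1)}$ itself, and then interacts with the hypothesized failure at the next resolution.

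\textbf{Step 3.} Descending $d = p, p-1, \ldots, 1$ and chaining these cases, the nesting $\alpha_p \le \alpha_{p-1} \le \cdots \le \alpha_1$ together with the divisibility $r_{d-1} \mid r_d$ ensures that each resolution either forces a new, pairwise distinct vector into $S$ or else contributes a fresh color to $\Lambda_I^{(p)}$ beyond the quota. After processing all $p$ resolutions the cumulative count of new vectors in $S$ exceeds $p+3$, or the cumulative count of new colors exceeds $|S|-2$, producing the required contradiction.

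\textbf{Main obstacle.} The principal difficulty will be the bookkeeping across resolutions. A pair $(v,w) \in S^2$ sharing an $\alpha_d$-prefix automatically shares every shorter $\alpha_{d'}$-prefix for $d' \ge d$, so the same pair can witness coincidences at several resolutions simultaneously, yet contributes a syntactically different color to each $C_E^{(d')}$. Guaranteeing that the vectors produced in Step 3 at different resolutions are genuinely new, and that the extra colors extracted via Lemma \ref{lem:transform_color} are genuinely not already counted in $C_I^{(p)}$, is where the argument is likely to become the most technical. Carefully exploiting the divisibility of $r_{d-1}$ into $r_d$ and the nesting of the $\alpha_d$ to disentangle these possibilities is the crux.
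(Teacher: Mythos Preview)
Your plan misses the key structural idea and, as written, does not constitute a proof. The paper's argument is short and rests on one observation you do not make: there is a natural injection $\jmath_d: C_E^{(d-1)} \to C_E^{(d)}$ sending $\{x,y\}$ to $\{(v,x),(v,y)\}$ for a suitable prefix $v$, and $h_{d-1}\circ\jmath_d$ is the identity on $C_E^{(d-1)}$ (here $h_{d-1}$ is the second coordinate of $\eta_{d-1}$). This immediately gives the chain
\[
|C_E^{(1)}|\le |C_E^{(2)}|\le\cdots\le|C_E^{(p)}|.
\]
From the hypothesis $|C_I^{(p)}|+|C_E^{(p)}|\le|S|-2\le p+1$ one gets $|C_E^{(p)}|\le p$ (if $|C_I^{(p)}|=0$ then all of $S$ shares an $\alpha_p$-prefix and $|C_E^{(p)}|\ge\binom{|S|}{2}\ge|S|-1$, a contradiction). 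If $|C_E^{(1)}|\le 1$ then $d=1$ works trivially; otherwise $2\le|C_E^{(1)}|\le\cdots\le|C_E^{(p)}|\le p$ is a nondecreasing sequence of $p$ integers in an interval of length $p-1$, so by pigeonhole some $|C_E^{(d-1)}|=|C_E^{(d)}|$. For that $d$ the injection $\jmath_d$ is a bijection, and since $h_{d-1}$ is its left inverse, $h_{d-1}$ (hence $\eta_{d-1}$) is injective on $C_E^{(d)}$.

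By contrast, your Steps~2--3 try to extract, at each failing resolution, either a new vertex of $S$ or a new color of $\Lambda_I^{(p)}$, and then sum over $d$. But you never establish the mechanism that makes this summation work: you need precisely the statement that a failure of injectivity of $\eta_{d-1}$ on $C_E^{(d)}$ forces $|C_E^{(d)}|>|C_E^{(d-1)}|$, and that is exactly the contrapositive of the paper's observation about $\jmath_d$. Without it, your case analysis in Step~2 does not yield distinct contributions across resolutions (the same pair in $S$ can witness failures at many $d$, as you yourself note), and Lemma~\ref{lem:transform_color} does not by itself separate the resulting colors in $\Lambda_I^{(p)}$. The ``main obstacle'' you flag is genuine, and it is resolved not by more careful bookkeeping but by the monotonicity-plus-pigeonhole argument above.
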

The proof of this claim will be given later. Let $d$ be the index
guaranteed by this claim and let $C_{I}=C_{I}^{(d)},\, C_{E}=C_{E}^{(d)}$.
Abusing notation,
for two vectors $v,w\in S$, we will from now on refer to the color
between $v$ and $w$ as the corresponding `color' in $C_{I}$ or
$C_{E}$.

Let $S'=\pi_{\alpha_{d}}(S)$ and, for a vector $v'\in S'$, let $T_{v'}=\{v\,:\,\pi_{\alpha_{d}}(v)=v',\, v\in S\}$.
Note that the sets $T_{v'}$ form a partition of $S$. Therefore,
\begin{equation}
\sum_{v'\in S'}|T_{v'}|=|S|.\label{eq:proof_1}
\end{equation}

Let $C_{B}$ be the set of colors which appear within the set $S'$
under the coloring $c_{p}$. 
Since $S'\subset\{0,1\}^{\alpha_{d}}$
and $\alpha_d < \alpha$, the inductive hypothesis implies that 
\begin{equation}
|C_{B}|\ge|S'|-1.\label{eq:proof_3}
\end{equation}

For a color $c\in C_{E}$, let $\mu_{c}$ be the number of (unordered)
pairs of vectors $v,w$ such that $c$ is the color between $v$ and
$w$. Note that
\begin{equation}
\sum_{c\in C_{E}}\mu_{c}=\sum_{v'\in S'}{|T_{v'}| \choose 2}\ge\sum_{v'\in S'}(|T_{v'}|-1).\label{eq:proof_2}
\end{equation}

Together with the three equations above, the following
bound on $|C_I|$, whose proof we defer for a moment, yields a contradiction.
\begin{equation} \label{eq:proof_4}
|C_{I}|\ge|C_{B}|+\sum_{c\in C_{E}}(\mu_{c}-1).
\end{equation}
Indeed, if this inequality holds, then, by
\eqref{eq:proof_4}, \eqref{eq:proof_3} and \eqref{eq:proof_2}, respectively, we have
\begin{align*}
 |C_{I}| + |C_E| &\ge \left((|S'| - 1) + \sum_{c \in C_E} (\mu_c - 1) \right) + |C_E|
   =  (|S'| - 1) + \sum_{c \in C_E} \mu_c  \\
   &\ge  (|S'| - 1) + \sum_{v' \in S'} (|T_{v'}| - 1) 
   = \left(\sum_{v' \in S'} |T_{v'}| \right) - 1.
\end{align*}
By \eqref{eq:proof_1}, we see that the right hand side 
is equal to $|S| - 1$. Therefore, we obtain $|C_I| + |C_E| \ge |S| -1$, 
which contradicts the assumption that
$|C_I| + |C_E| \le |S| - 2$.

To prove \eqref{eq:proof_4}, we examine the interaction between
the three sets of colors $C_I$, $C_B$ and $C_E$. 
Note that each color $c\in C_{I}$
is of the form $c=(c',?)$ for some $c'\in C_{B}$, where the question mark
`?' stands for an unspecified coordinate. This fact already
gives the trivial bound $|C_{I}| \ge |C_{B}|$. To obtain \eqref{eq:proof_4},
we improve this inequality by considering the `?' part of the
color and its relation to colors in $C_E$.
Take a color $c''=\{v'',w''\}\in C_{E}$
and suppose that $c''$ has multiplicity $\mu_{c''} \ge 2$. Then there exist
vectors $x, y \in S'$ such that $(x,v''), (x, w'') \in T_{x}$
and $(y,v''), (y, w'') \in T_y$.  Consider
the color of the pairs $\big((x,v''),(y,v'')\big)$
and $\big((x,v''),(y,w'')\big)$ in $C_{I}$. These colors are of the form
\begin{align*}
\big(c_{p}(x,y),\eta_{d-1}(v'',v'')\big) & =(c_p(x,y),0)\in C_{I} \quad\textrm{and}\\
\big(c_{p}(x,y),\eta_{d-1}(v'',w'')\big) & =\big(c_p(x,y),\eta_{d-1}(c'')\big)\in C_{I}.
\end{align*}
Here we abuse notation and define $\eta_{d-1}(c'')=\eta_{d-1}(v'',w'')$, which is allowed since
the right-hand-side is symmetric in the two input coordinates.
Therefore, having a color $c''$ with $\mu_{c''} \ge 2$ already implies that
$|C_I| \ge |C_B| + 1$. We carefully analyze the gain 
coming from these pairs for each color in $C_E$. To this end,
for each $x \in S'$, we define
\[ C_{E, x} = \Big\{\{v'', w''\} \,:\, (x,v''), (x,w'') \in T_{x}, \,\, v'' \neq w'' \Big\}. \]

For each $c' \in C_B$, we will count the number of colors of the form
$(c', ?) \in C_I$. There are two cases.

\noindent \textbf{Case 1} : For all $x , y \in S'$ with $c_p(x,y) = c'$,  $C_{E,x} \cap C_{E, y} = \emptyset$. 

Apply the trivial bound asserting that there is at least
one color of the form $(c', ?)$ in $C_I$.

\noindent \textbf{Case 2} : There exists a pair $x , y \in S'$ with $c_p(x,y) = c'$ such that $C_{E,x} \cap C_{E, y} \neq \emptyset$. 

If we have $c'' \in C_{E,x} \cap C_{E, y}$
for some $x, y \in S'$ with $c_p(x,y) = c'$, then, by the observation above,
we have both $(c', 0)$ and $(c', \eta_{d-1}(c''))$ in $C_I$. This shows that
the number of colors in $C_I$ of the form $(c', ?)$ is at least
\[ \left|\{(c', 0)\} \cup \left\{(c', \eta_{d-1}(c'')) \,:\, \exists x,y \in S',\,
               c_p(x,y) = c',\,\, c'' \in C_{E,x} \cap C_{E, y} \right\} \right|. \]
By Claim \ref{sub:Proof-of-Claim}, the function $\eta_{d-1}$ is
injective on $C_E$ and thus the above number is equal to
\[ 1 + \left| \left\{ c'' \,:\, \exists x,y \in S',\,
               c_p(x,y) = c',\,\, c'' \in C_{E,x} \cap C_{E, y} \right\} \right|. \]

\medskip

By combining cases 1 and 2, we see that the number of colors in $C_I$ satisfies 
\begin{align*}
 |C_I| \ge& \,\, |C_B| + \sum_{c' \in C_B} \left| \left\{c'' \,:\, \exists x,y \in S',\,
               c_p(x,y) = c',\,\, c'' \in C_{E,x} \cap C_{E, y} \right\} \right| \\
 =\,\,&\,\, |C_B| + \sum_{c'' \in C_E} \left| \left\{c' \,:\, \exists x,y \in S',\,
               c_p(x,y) = c',\,\, c'' \in C_{E,x} \cap C_{E, y} \right\} \right|.
\end{align*}
For a fixed color $c'' \in C_E$, there are precisely $\mu_{c''}$ vectors $x \in S'$ for
which the color $c''$ is in $C_{E,x}$. Hence, by the induction 
hypothesis, for each fixed $c''$, we have
\[ \left| \left\{c' \,:\, \exists x,y \in S',\,
               c_p(x,y) = c',\,\, c'' \in C_{E,x} \cap C_{E, y} \right\} \right| 
   \ge \mu_{c''} - 1. \]
Thus we obtain
\[ |C_I| \ge |C_B| + \sum_{c'' \in C_E} (\mu_{c''} - 1), \]
which is \eqref{eq:proof_4}.

\subsection{Proof of Claim \ref{claim:index}\label{sub:Proof-of-Claim}}

Claim \ref{claim:index} asserts that there exists an index $d$ such
that $\eta_{d-1}(c)$ is distinct for each $c\in C_{E}^{(d)}$.

It will be useful to consider the function $h_{d}$, which is defined as follows: 
for distinct vectors $v$ and $w$, define
$$h_{d}(v,w)=\{v_{i}^{(d)},w_{i}^{(d)}\},$$
where $v_{i}^{(d)},w_{i}^{(d)}$ are the first pair of blocks of resolution
$d$ for which $v_{i}^{(d)}\neq w_{i}^{(d)}$. Also, define
$h_{d}(v,v)=0$ for all vectors $v$. Note that we can also define
$h_{d}$ over unordered pairs $\{v,w\}$ of vectors as
$h_{d}(\{v,w\}) = h_{d}(v,w)$, 
since $h_d(v,w) = h_d(w,v)$ for all pairs $v$ and $w$. 
Throughout the subsection, by abusing notation,
we will be applying $h_d$ to both ordered and unordered pairs
without further explanation.

Recall that $\eta_{d}(v,w)=(i,\{v_{i}^{(d)},w_{i}^{(d)}\})$
and $\eta_{d}(v,v)=0$ and, therefore, $\eta_{d}$ refines $h_{d}$
(both considered as functions over the domain $C_{E}^{(d)}$).
Hence, to prove the claim, it suffices to prove that $h_{d-1}(c)$
is distinct for each $c\in C_{E}^{(d)}$. Another important observation
is that for all $1\le d\le p$, we can redefine the sets $C_{E}^{(d)}$
as 
\[
C_{E}^{(d)}=\big\{ h_{d}(v,w)\,:\,\pi_{\alpha_{d}}(v)=\pi_{\alpha_{d}}(w),\, v\neq w,\, v,w\in S\big\}.
\]

We first prove that there is a certain monotonicity between the sets
$C_{E}^{(d)}$ for $1\le d\le p$.
\begin{claim} \label{claim:emerged_monotonicity}
For all $d$ satisfying $2\le d\le p$,
there exists an injective map $\jmath_{d}\,:\, C_{E}^{(d-1)}\rightarrow C_{E}^{(d)}$
which maps $\{x,y\}\in C_{E}^{(d-1)}$ to
\[
\jmath_{d}(x,y)=\Big\{(v,x),(v,y)\Big\}\in C_{E}^{(d)},
\]
for some vector $v\in\{0,1\}^{\alpha_{d-1}-\alpha_{d}}$ depending
on the color $\{x,y\}$. Furthermore, $h_{d-1}\circ\jmath_{d}$ is the
identity map on $C_{E}^{(d-1)}$.\end{claim}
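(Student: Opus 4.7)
The plan is to construct $\jmath_d$ from an arbitrary choice of representative pair for each color and then derive injectivity for free from the identity $h_{d-1}\circ\jmath_d=\mathrm{id}$. For each color $\{x,y\}\in C_{E}^{(d-1)}$ I would fix some $V,W\in S$ with $\pi_{\alpha_{d-1}}(V)=\pi_{\alpha_{d-1}}(W)$, $V\neq W$, and $h_{d-1}(V,W)=\{x,y\}$. Because $\alpha_{d-1}$ is the largest multiple of $r_{d-1}$ strictly less than $\alpha$, the tail $\pi_{[\alpha_{d-1}+1,\alpha]}(V)$ has length at most $r_{d-1}$ and is the unique final (possibly short) block of resolution $d-1$ of $V$, with the analogous statement for $W$; since all earlier resolution-$(d-1)$ blocks of $V$ and $W$ agree, this pair of tails is exactly $\{x,y\}$. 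Now set $v=\pi_{[\alpha_d+1,\alpha_{d-1}]}(V)$, which equals $\pi_{[\alpha_d+1,\alpha_{d-1}]}(W)$ by the agreement of $V$ and $W$ on their first $\alpha_{d-1}$ coordinates, and define $\jmath_d(\{x,y\})=\{(v,x),(v,y)\}$.

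Membership in $C_E^{(d)}$ is then essentially automatic. Since $\alpha_d\le\alpha_{d-1}$, we have $\pi_{\alpha_d}(V)=\pi_{\alpha_d}(W)$, and applying the same ``largest multiple'' argument with $r_d$ in place of $r_{d-1}$ shows that $h_d(V,W)$ is the pair of tails $\pi_{[\alpha_d+1,\alpha]}(V),\pi_{[\alpha_d+1,\alpha]}(W)$, which splits as $\{(v,x),(v,y)\}$ by construction. For the identity $h_{d-1}\circ\jmath_d=\mathrm{id}$, the crucial observation is that $|v|=\alpha_{d-1}-\alpha_d$ is divisible by $r_{d-1}$ (both $\alpha_{d-1}$ and $\alpha_d$ are), so in the decomposition of $(v,x)$ and $(v,y)$ into blocks of resolution $d-1$, the prefix $v$ breaks cleanly into complete common blocks and the distinct tails $x,y$ form the final (possibly short) blocks; hence $h_{d-1}((v,x),(v,y))=\{x,y\}$. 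Injectivity of $\jmath_d$ is then immediate: if two colors have the same image under $\jmath_d$, applying $h_{d-1}$ recovers them.

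The whole argument is essentially bookkeeping about block boundaries, powered by the divisibility relations $r_{d-1}\mid\alpha_d$ and $r_{d-1}\mid\alpha_{d-1}$ together with the ``short tail'' inequality $\alpha-\alpha_{d-1}\le r_{d-1}$ (and its analogue for $d$). The only subtlety is that $\jmath_d$ depends on a choice of representative pair for each color in $C_E^{(d-1)}$, but any such choice works and the stated properties hold uniformly, so I do not expect this to cause any real obstacle.
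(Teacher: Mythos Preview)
Your proposal is correct and follows essentially the same route as the paper's proof: fix a representative pair $V,W\in S$ for each color in $C_E^{(d-1)}$, split off the segment between coordinates $\alpha_d+1$ and $\alpha_{d-1}$ as the prefix $v$, and use the divisibility $r_{d-1}\mid(\alpha_{d-1}-\alpha_d)$ to verify $h_{d-1}\circ\jmath_d=\mathrm{id}$, from which injectivity is immediate. The paper's $v_x,v_y,v_2$ are exactly your $V,W,v$, and your explicit remark that injectivity is a free consequence of the left-inverse identity is precisely how the paper's ``The claim follows'' should be read.
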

\begin{proof}
Take a color $\{x,y\}\in C_{E}^{(d-1)}$ and assume that $\{x,y\}=h_{d-1}(v_{x},v_{y})$
for $v_{x},v_{y}\in S$. By the definition of $C_{E}^{(d-1)}$, we may take $v_{x}$
and $v_{y}$ of the form 
\[
v_{x}=(v_{0},x)\quad\textrm{and}\quad v_{y}=(v_{0},y),
\]
for some vector $v_{0}\in\{0,1\}^{\alpha_{d-1}}$.
Fix an arbitrary such pair $(v_{x},v_{y})$ for each $\{x,y\} \in C_{E}^{(d-1)}$. 

 Let $v_{0}=(v_{1},v_{2})$
for $v_{1}\in\{0,1\}^{\alpha_{d}}$ and $v_{2}\in\{0,1\}^{\alpha_{d-1}-\alpha_{d}}$.
Then $v_{x}=(v_{1},v_{2},x)$ and $v_{y}=(v_{1},v_{2},y)$. Since
\[
\pi_{\alpha_{d}}(v_{x})=v_{1}=\pi_{\alpha_{d}}(v_{y}),
\]
we see that
\[
h_{d}(v_{x},v_{y})=\Big\{(v_{2},x),(v_{2},y)\Big\}\in C_{E}^{(d)}.
\]
Define $\jmath_{d}(x,y)=h_{d}(v_{x},v_{y})$ and note that
the range of $\jmath_{d}$ is indeed $C_{E}^{(d)}$. Moreover, since $v_{2}$
is a vector of length $\alpha_{d-1}-\alpha_{d}$ which is divisible
by $r_{d-1}$, we see that 
\[ h_{d-1}(\jmath_{d}(x,y)) = h_{d-1}\Big( (v_2,x), (v_2,y) \Big) = \{x,y\}.\]
The claim follows.
\end{proof}
In particular, Claim \ref{claim:emerged_monotonicity} implies that
\[
|C_{E}^{(1)}|\le|C_{E}^{(2)}|\le\cdots\le|C_{E}^{(p)}|.
\]

If $|C_{E}^{(1)}|\le1$, then $d=1$ trivially satisfies
the required condition. Hence, we may assume that $|C_{E}^{(1)}|\ge2$.
On the other hand, recall that we are assuming that $|C_{I}^{(p)}|+|C_{E}^{(p)}|\le|S|-2\le p+1$.
If $|C_{I}^{(p)}|=0$, then there exists at most one element 
$v_p \in \pi_{\alpha_{p}}(S)$
and all elements of $S$ are of the form $(v_p,x)$ for some
$x \in \{0,1\}^{\alpha - \alpha_p}$. But then
\begin{align} \label{eq:proof_of_claim_slack}
|C_E^{(p)}| \ge {|S| \choose 2} \ge |S| - 1, 
\end{align}
contradicting our assumption.
Therefore, we may assume that $|C_{I}^{(p)}|\ge1$,
from which it follows that $|C_{E}^{(p)}|\le p$. Hence, 
\[
2\le|C_{E}^{(1)}|\le|C_{E}^{(2)}|\le\cdots\le|C_{E}^{(p)}|\le p.
\]
If $p=1$, this is impossible. If $p\ge2$, then, by the pigeonhole
principle, there exists an index $d$ such that $|C_{E}^{(d-1)}|=|C_{E}^{(d)}|$.
For this index, the map $\jmath_{d}$ defined in Claim \ref{claim:emerged_monotonicity}
becomes a bijection. Then, since $h_{d-1}\circ\jmath_{d}$ is the
identity map on $C_{E}^{(d-1)}$, we see that $h_{d-1}(c)$ are distinct
for all $c\in C_{E}^{(d)}$. This proves the claim.

\section{Concluding Remarks}
\label{sec:remarks}

\subsection{Better than $(p+3,p+2)$-coloring}

Let $r=\sqrt{\frac{p+4}{2}}$. We can in fact prove that $c_{p}$ is
a $\left(p+\lfloor r\rfloor+1,p+\lfloor r\rfloor\right)$-coloring.
This improvement comes from exploiting the slackness of the inequality
\eqref{eq:proof_of_claim_slack} used in Subsection \ref{sub:Proof-of-Claim}.
To see this, we replace the bound on $S$ by $|S|\le p+r+1$ in the
proof given above. Since we have already proved the result for $|S| \le p+3$,
we may assume that $|S| \ge p+4$.

If $|C^{(p)}_{I}|\ge r-1$, then we have 
\[
|C^{(p)}_{E}|\le|S|-2-|C^{(p)}_{I}|\le p
\]
and we can proceed as in the proof above.
We may therefore assume that $|C^{(p)}_{I}|<r-1$. Let $S_p =\pi_{\alpha_p}(S)$. Then, since 
\[
|S_p|-1\le|C^{(p)}_{I}|<r-1,
\]
we know that $|S_p|<r$. Since 
\[
\sum_{v\in S_p}|\pi_{\alpha_{p}}^{-1}(v)|=|S|,
\]
there exists a $v\in S_p$ such that $|\pi_{\alpha_{p}}^{-1}(v)|\ge\frac{|S|}{|S_{p}|}$.
Note that every pair of vectors $w_{1},w_{2}\in\pi_{\alpha_{p}}^{-1}(v)$
gives a distinct emerging color. Moreover, by the inductive hypothesis,
we have at least $|S_p|-1$ inherited colors. Hence, the total number
of colors in the coloring $c_{p}$ within the set $S$ is at least
\[
|S_p|-1+{|\pi_{\alpha_{p}}^{-1}(v)| \choose 2}\ge |S_p|-1+\frac{1}{2}\frac{|S|}{|S_{p}|}\left(\frac{|S|}{|S_{p}|}-1\right),
\]
which, since 
\[ |S_{p}|< r=\sqrt{\frac{p+4}{2}}\le\sqrt{\frac{|S|}{2}},\]
is minimized when $|S_{p}|$ is maximized.
Thus the number of colors within the set $S$ is at least 
\[
\sqrt{\frac{|S|}{2}}-1+|S|-\sqrt{\frac{|S|}{2}}=|S|-1.
\]
This concludes the proof.

\subsection{Using fewer colors}
\label{sec:conclude_fewer_colors}

Recall that the coloring $c_{p}$ was built from the functions 
\[
\eta_{d}(v,w)=\big(i,\{v_{i}^{(d)},w_{i}^{(d)}\}\big),
\]
where $i$ is the minimum index for which $v_{i}^{(d)}\neq w_{i}^{(d)}$.
The function $\eta_{d}$ can in fact be replaced by the function 
\[
h_{d}(v,w)=\big\{v_{i}^{(d)},w_{i}^{(d)}\big\}
\]
(note that this is the function used in Section \ref{sub:Proof-of-Claim}).
In other words, even if we replace all occurrences of $\eta_{d}$ with
$h_{d}$ in the definition of $c_{p}$, we can still show that $c_{p}$
is a $(p+3,p+2)$-coloring. Moreover, there exists a constant $a_p$ such that the coloring of the complete graph on $n$ vertices defined in this
way uses only 
\[
2^{a_{p} (\log n)^{1-1/(p+1)}}
\]
colors. That is, we gain a $\log\log n$ factor in the exponent
compared to Theorem \ref{thm:main1}. The tradeoff is that the proof is now
more complicated, the chief difficulty being to find an appropriate
analogue of Lemma \ref{lem:monotone_proj} which works when $\eta_d$ 
is replaced by $h_d$. 

\subsection{Top-down approach}
\label{sec:conclude_top_down}

There is another way to understand our coloring as a generalization
of Mubayi's coloring. Recall that Mubayi's coloring is given as follows:
for two vectors $v,w\in[m]^{t}$ satisfying $v=(v_{1},\ldots,v_{t})$
and $w=(w_{1},\ldots,w_{t})$, let 
\[
c(v,w)=\big(\{v_{i},w_{i}\},a_{1},a_{2},\ldots,a_{t}\big),
\]
where $i$ is the minimum index for which $v_{i}\neq w_{i}$ and
$a_{j}=0$ if $v_{j}=w_{j}$ and $a_{j}=1$ if $v_{j}\neq w_{j}$.

Suppose that we are given positive integers $t_{1}$ and $t_{2}$.
For two vectors $v,w\in[m]^{t_{1}t_{2}}$, let $v=(v_{1}^{(1)},\ldots,v_{t_{2}}^{(1)})$
and $w=(w_{1}^{(1)},\ldots,w_{t_{2}}^{(1)})$ for vectors $v_{i}^{(1)}\in[m]^{t_{1}}$
and $w_{i}^{(1)}\in[m]^{t_{1}}$. Define the coloring $c^{(2)}$ as
\[
c^{(2)}(v,w)=\big(\{v_{i}^{(1)},w_{i}^{(1)}\},c(v_{1}^{(1)},w_{1}^{(1)}),\ldots,c(v_{t_{2}}^{(1)},w_{t_{2}}^{(1)})\big),
\]
where $i$ is the minimum index for which $v_i^{(1)} \neq w_i^{(1)}$.

Note that this can also be understood as a variant of $c$, where
we record more information in the $(a_{1},\ldots,a_{t})$ part of
the vector (this is a `top-down' approach and the previous definition
is a `bottom-up' approach). The coloring $c^{(2)}$ is essentially
equivalent to $c_2$ defined in Section \ref{sec:conclude_fewer_colors} above and can be
further generalized to give a coloring corresponding to $c_p$ for $p \ge 3$.
However, the proof again becomes more technical for this choice of definition.

One advantage of defining the coloring using this top-down approach
is that it becomes easier to see why the coloring $c_p$ on $K_{n_2}$ contains the
coloring $c_p$ on $K_{n_1}$, where $n_1 < n_2$, as an induced coloring.
To see this in the example above,
suppose that $n_1 = m^{t_1 t_2}$ and $n_2 = n^{s_1 s_2}$ for
$m \le n$, $t_1 \le s_1$ and $t_2 \le s_2$. Then the natural injection
from $[m]$ to $[n]$ extends to an injection from $[m]^{t_1}$
to $[n]^{s_1}$ and then to an injection  
from $[m]^{t_1 t_2}$ to $[n]^{s_1 s_2}$. This injection shows that
the coloring $c^{(2)}$ on $K_{n_2}$ contains the coloring $c^{(2)}$ on 
$K_{n_1}$ as an induced coloring. As in Section \ref{sub:Mubayi}, it then follows that 
$c^{(2)}$ (and thus $c_2$) fails to be a $(q,q^{\varepsilon})$-coloring 
for large enough $q$.
Similarly, for all fixed $p \ge 3$, we can show that $c_p$ fails to be 
a $(q,q^{\varepsilon})$-coloring for large enough $q$.

\subsection{Stronger properties}

We can show (see \cite{CoFoLeSu}) that Mubayi's coloring, 
discussed in Section \ref{sub:Mubayi}, actually has
the following stronger property: for every pair of colors, the graph whose edge set is the union of these two color classes has chromatic number at most three
(previously, we only established the fact that the clique number is at most three).
We suspect that this property can be generalized.

\begin{ques}
Let $p \ge 4$ be an integer. 
Does there exist an edge-coloring of the complete graph $K_n$ with 
$n^{o(1)}$ colors such that the union of every $p-1$ color classes has
chromatic number at most $p$?
\end{ques}

We do not know whether our coloring has this property or not.

\subsection{Lower bound}

Some work has also been done on the lower bound for $f(n,p,p-1)$.
As mentioned in the introduction, for $p = 3$ it is known that
$c'\frac{\log n}{\log \log n} \le f(n,3,2) \le c \log n$. For $p = 4$, 
the gap between the lower and upper bounds is much wider.
The well-known bound $r_k(4) \le k^{ck}$
on the multicolor Ramsey number of $K_4$ translates to
$f(n,4,3) \ge c\frac{\log n}{\log \log n}$, while
Mubayi's coloring gives an upper bound of
$f(n,4,3) \le e^{c\sqrt{\log n}}$.
The lower bound has been improved, first by Kostochka and Mubayi \cite{KM08},
to $f(n,4,3) \ge c\frac{\log n}{\log \log \log n}$
and then, by Fox and Sudakov \cite{FS09}, to
$f(n,4,3) \ge c\log n$, which is the current best known bound.

For $p \ge 5$, we can obtain a similar lower bound from the following formula,
valid for all $p$ and $q$.
\begin{align} \label{eq:recursive_lower} 
f\Big(n f(n,p-1,q-1), p, q\Big) \ge f(n,p-1,q-1).
\end{align}

To prove this formula, put $N= nf(n,p-1,q-1)$ and consider an edge-coloring of $K_N$
with fewer than $f(n,p-1,q-1)$ colors. It suffices to show that there
exists a set of $p$ vertices which uses at most $q-1$ colors on its edges.
If $f(n,p-1,q-1)=1$, then the inequality above is trivially true. 
If not, then for a fixed vertex $v$, there exists 
a set $V$ of at least $\left\lceil \frac{N-1}{f(n,p-1,q-1) - 1} \right\rceil \ge n$ vertices adjacent 
to $v$ by the same color. Since the edges within the set $V$ 
are colored by fewer than $f(n,p-1,q-1)$ colors,
the definition of $f(n,p-1,q-1)$ implies that we can find a set $X$ of $p-1$ vertices
with at most $q-2$ colors used on its edges. 
It follows that the set $X \cup \{v\}$ 
is a set of $p$ vertices with at most $q-1$ colors used on its edges. 
The claim follows.

From \eqref{eq:recursive_lower} and the lower bound 
$f(n,4,3) \ge c \log n$, 
one can deduce that 
$$f(n,p,p-1) \ge (1 + o(1)) f(n,4,3) \ge (c + o(1)) \log n$$ 
for all $p \ge 5$. On the other hand,
since the best known upper bound on $f(n,p,p-1)$ is
$$f(n,p,p-1) \le 2^{16p (\log n)^{1 - 1/(p-2)} \log \log n},$$
the gap between the upper and lower bounds gets
wider as $p$ gets larger. It would be interesting to know whether either bound can be substantially improved. 
In particular, the following question seems important.

\begin{ques}
For $p \ge 5$, can we give better lower bounds on $f(n,p,p-1)$ than the one
which follows from $f(n,4,3)$? 
\end{ques}


\begin{thebibliography}{}


\bibitem{CoFoLeSu}
{D. Conlon, J. Fox, C. Lee and B. Sudakov,} On the grid Ramsey problem and related questions, {\it in preparation}.

\bibitem{EM00}
{D. Eichhorn and D. Mubayi,} {Edge-coloring cliques with many colors on subcliques,}
{\it Combinatorica} {\bf 20} (2000), 441--444. 

\bibitem{E75}
{P. Erd\H{o}s,} {Problems and results on finite and infinite graphs,} {in Recent advances in graph theory (Proc. Second Czechoslovak Sympos., Prague, 1974),} 183--192, Academia, Prague, 1975. 

\bibitem{E81}
{P. Erd\H{o}s,} {Solved and unsolved problems in combinatorics and combinatorial number theory,}
in {Proceedings of the Twelfth Southeastern Conference on Combinatorics, Graph Theory and Computing, Vol. I (Baton Rouge, La., 1981),}
{\it Congr. Numer.} {\bf 32} (1981), 49--62. 

\bibitem{EG97}
{P. Erd\H{o}s and A.  Gy\'arf\'as,} {A variant of the classical Ramsey problem,}
{\it Combinatorica} {\bf 17} (1997), 459--467.

\bibitem{FS09}
{J. Fox and B. Sudakov,} {Ramsey-type problem for an almost monochromatic $K_4$,}
{\it SIAM J. Discrete Math.} {\bf 23} (2008/09), 155--162. 

\bibitem{KM08}
{A. Kostochka and D. Mubayi,} {When is an almost monochromatic $K_4$ guaranteed?}
{\it Combin. Probab. Comput.} {\bf 17} (2008), 823--830.

\bibitem{M98}
{D. Mubayi,} {Edge-coloring cliques with three colors on all 4-cliques,}
{\it Combinatorica} {\bf 18} (1998), 293--296. 


\bibitem{NeRo} J. Ne\v set\v ril and M. Rosenfeld, {I. Schur, C. E. Shannon and Ramsey numbers, a short story,}  {\it Discrete Math.} {\bf 229} (2001), 185--195.

\bibitem{Ro} V. Rosta, Ramsey theory applications, {\it Electron. J. Combin.} {\bf 11} (2004), Research Paper 89, 48 pp. 

\bibitem{SS01}
{G. N. S\'ark\"ozy and S. M. Selkow,} {On edge colorings with at least $q$ colors in every subset of $p$ vertices,} {\it Electron. J. Combin.} {\bf 8} (2001), Research Paper 9, 6pp. 

\bibitem{SS03}
{G. N. S\'ark\"ozy and S. M. Selkow,} {An application of the regularity lemma in generalized Ramsey theory,} {\it J. Graph Theory} {\bf 44} (2003), 39--49.

\bibitem{Sc} I. Schur, \"Uber die Kongruenz $x^m + y^m = z^m$ (mod $p$), {\it Jber. Deutsch. Math.-Verein.} {\bf 25}  (1916), 114--116.

\end{thebibliography}
\end{document}